\newcommand{\x}{\mathbf{x}}
\newcommand{\y}{\mathbf{y}}
\newcommand{\p}{\mathbf{p}}
\newcommand{\1}{\mathbf{1}}
\newcommand{\0}{\mathbf{0}}
\newcommand{\X}{\mathbf{X}}
\newcommand{\Y}{\mathbf{Y}}
\newcommand{\RR}{\mathbb{R}}
\newcommand{\PP}{\mathbb{P}}
\newcommand{\CM}{\operatorname{CM}}
\newcommand{\diag}{\text{diag}}
\newcommand{\EMTPtwo}{$ \text{EMTP}_2 $ }
\newcommand{\EMTPtwok}{$ \text{EMTP}_2 $, }
\newcommand{\indep}{\perp \!\!\! \perp}
\newcommand{\ones}{\mathbf{1}}
\newtheorem{thm}{Theorem}[section]
\newtheorem{prop}[thm]{Proposition}
\newtheorem{cor}[thm]{Corollary}
\newtheorem{lemma}[thm]{Lemma}
\theoremstyle{definition}
\newtheorem{defi}{Definition}
\newtheorem{ex}{Example}
\newtheorem{remark}[thm]{Remark}
\newcommand{\spa}{\rm{span}}
\newcommand{\HR}{\rm{HR}}
\begin{document}
	\title[Extremal conditional independence for H\"usler--Reiss distributions via modularity]{Extremal conditional independence for H\"usler--Reiss distributions via modular functions}
     \author{Karel Devriendt}
    \address{Mathematical Institute, University of Oxford, Oxford, United Kingdom}
	\author{Ignacio Echave-Sustaeta Rodr\'iguez}
    \address{Department of Mathematics and Computer Science, Eindhoven University of Technology, Eindhoven, The Netherlands}
    \author{Frank R\"ottger}
    \address{Department of Applied Mathematics, University of Twente, Enschede, The Netherlands}

\begin{abstract}
We study extremal conditional independence for H\"{u}sler--Reiss distributions, which is a parametric subclass of multivariate Pareto distributions.
As the main contribution, we introduce two set functions, i.e.~functions which assign a value to the distribution and each of its marginals, and show that extremal conditional independence statements can be characterized by modularity relations for these functions.
For the first function, we make use of the close connection between H\"{u}sler--Reiss and Gaussian models to introduce a multiinformation-inspired measure $m^{\HR}$ for H\"{u}sler--Reiss distributions. 
For the second function, we consider an invariant $\sigma^2$ that is naturally associated to the H\"{u}sler--Reiss parameterization and establish the second modularity criterion under additional positivity constraints.
Together, these results provide new tools for describing extremal dependence structures in high-dimensional extreme value statistics.
In addition, we study the geometry of a bounded subset of H\"usler--Reiss parameters and its relation with the Gaussian elliptope.
\end{abstract}
\maketitle
\section{Introduction}

When several components of a system become large at the same time, decisions depend on how these extremes occur jointly rather than on each margin in isolation. Examples include simultaneous river floods, joint price spikes, or infrastructure failures. In such problems one wants models that describe which subsets can become large together and which parts of the system effectively decouple once others are known.
A popular model for such questions are generalized multivariate Pareto distributions, which can be obtained as the limit of threshold exceedances \citep{roo2006}.
Here, we consider a multivariate observation as extreme if at least one entry exceeds a high threshold $u$, where the limit is taken as $u\to\infty$.
While this approach allows to model multivariate extremes of large systems, its support is by construction not a product space.
Thus, standard notions of conditional independence and graphical models are not applicable.
A suitable alternative called \emph{extremal conditional independence} was introduced by \citet{engelkehitz}, which sparked a new research direction for extremal dependence modeling. We refer to the review article of \citet{engelke2024graphicalmodelsmultivariateextremes} for further details and references.

In this paper, we study extremal conditional independence for a parametric subclass of multivariate Pareto distributions.
This subclass is the family of H\"usler--Reiss distributions, which can be parametrized by a \emph{variogram} matrix~$\Gamma$. 
This family of distributions can be considered as an extremal analogue of the multivariate Gaussians, due to many remarkable properties.
For example, if the threshold exceedances in a system converge to a H\"usler--Reiss distribution, the limit of threshold exceedances of any subsystem will also be H\"usler--Reiss, parameterized by the corresponding submatrix of $\Gamma$.
Furthermore, the H\"usler--Reiss distribution allows an alternative parameterization by a \emph{precision matrix}~$\Theta$, which can be interpreted as the Laplacian matrix of a signed graph~\citep{devriendt2022a}.
As H\"usler--Reiss distributions allow a parametric encoding of extremal conditional independence, they give rise to extremal graphical models, where zeros in~$\Theta$ encode the pairwise extremal Markov property \citep{HES2022}. 

As the main contribution of this paper we introduce two criteria that encode arbitrary extremal conditional independence statements for H\"usler--Reiss distributions.
First, we introduce a general criterion based on Gaussian multiinformation.
Here, we exploit that H\"usler--Reiss distributions can be characterized by multivariate Gaussians where both the covariance and mean depend on the variogram matrix $\Gamma$.
For a $d$-variate distribution, this gives rise to a set function $m^{\HR}$ that assigns to each subset $I\subset\{1,\dots,d\}$ of indices the value
$$
m^{\HR}(I) \,:=\, -\frac{1}{2}\log\det\begin{pmatrix}
    -\Gamma_{I,I}/2 & \ones\\
    -\ones^\top &0\\
\end{pmatrix}.
$$
The main result is that this set function satisfies a linear modularity relation of the form $m^{\HR}(A B C)+m^{\HR}(C)=m^{\HR}(A B)+m^{\HR}(B C)$ if and only if there is an extremal conditional independence relation between the nonempty disjoint subsets $A,B,C$ of variables; we refer to Theorem \ref{THM:MULTIINFO} for details and notation.
We highlight that this provides the first parametric encoding of extremal conditional independence for multivariate conditioned sets, since all previous results were restricted to conditioned sets that are singletons \citep{engelke2024graphicalmodelsmultivariateextremes,EGR2025}.

The second set function is based on an invariant that naturally shows up in the H\"{u}sler--Reiss parameterization and its exponent measure density. For a $d$-variate distribution with variogram $\Gamma$, this gives rise to a set function $\sigma^2$ that assigns to each subset $I\subset\{1,\dots,d\}$ of indices the value
$$
\sigma^2(I) := \frac{1}{2}\big(\1^T\Gamma_{I,I}^{-1}\1\big)^{-1}.
$$
Under the condition that the associated precision matrix $\Theta$ has nonpositive off-diagonal entries and that $\Gamma^{-1}$ has positive row sums, we show that $\sigma^2$ satisfies modularity relations if and only if an associated conditional independence relation holds; this is Theorem \ref{THM:SIGMASQ}. 
We illustrate both results in detail for H\"{u}sler--Reiss graphical models for the $4$-cycle graph. We note that the sign restriction on $\Theta$ is equivalent to extremal multivariate total positivity of order 2 ($\text{EMTP}_2$) for H\"usler--Reiss distributions, which is an important notion of positive dependence \citep{REZ2023}. If $\Gamma$ is interpreted in terms of resistance geometry, the set function $\sigma^2$ is called the resistance radius, while the condition that $\Gamma^{-1}$ has positive rowsums imposes positive resistance curvature.
We refer to \citet{devriendt2022a} for further reading on resistance geometry.

Finally, we investigate the geometry of variogram matrices and introduce the H\"usler--Reiss elliptope as a natural choice for a bounded geometric object defined by variogram matrices. We discuss the relationship of the H\"usler--Reiss elliptope with the classical Gaussian elliptope, and visualize our findings in three dimensions.

\subsection*{Outline} The paper is structured in five sections.
In Section~\ref{sec:preliminaries} we recall the necessary preliminaries for threshold exceedances, H\"usler--Reiss distributions and extremal conditional independence.
Next, we derive our first main result, which shows the equivalence of arbitrary extremal conditional independence and the modularity of $m^{\HR}$, in Section~\ref{sec:multiinfo}.
The second main result on the modularity of $\sigma^2$ and its relation to extremal conditional independence for H\"usler--Reiss distributions is discussed in Section~\ref{SUBSEC:MODULARITYSIGMA}.
Finally, in Section~\ref{sec:geometry}, we discuss the geometry of a bounded subset of valid variogram matrices and its connection with the Gaussian elliptope.

\subsection*{Notation}
Unions of sets $A\cup B$ are abbreviated as $AB$. We write $[d]:=\{1,\dots,d\}$ and the complement of a set $[d]\setminus I $ is written as $ \backslash I$. 
Submatrices of a matrix are written with comma between row/column index, e.g. $\Gamma_{I,I}$ or $\Gamma_{Ai,Bj}$. A general subset of $[d]$ is $I\subseteq[d]$. A general triple of subsets is $A,B,C\subseteq [d]$.

\section{Preliminaries}\label{sec:preliminaries}

\subsection{Multivariate extremes and extremal conditional independence}\label{SUBSEC:EXTREMES}

Let $\X=(X_1,\dots,X_d)$ be a $d$-variate random vector, where each margin $X_i$ follows a continuous probability distribution. In extremal dependence modeling we are interested in modeling simultaneous occurrences of rare events.
If the limit exists, the extremal correlation coefficient
\[\chi_{ij} := \lim_{q\to 0}\mathbb{P}( F_i(X_i)>1-q\mid F_j(X_j)>1-q ),\]
where $F_i$ is the cumulative distribution function of $X_i$,
is a bivariate summary statistic for extremal dependence.
If $\chi_{ij}>0$, we call $X_i$ and $X_j$ asymptotically dependent, while for $\chi_{ij}=0$, one speaks of asymptotic independence.
In order to focus on the extremal dependence structure we define a standardized vector $\X^*$ via $X_i^* =-\log(1-F_i(X_i))$, such that all $X_i^*$ follow a standard exponential distribution.

In order to model the extremal behaviour of $\X^*$, we use the approach of multivariate threshold exceedances. Here, one considers any observation of $\X^*$ as extreme if at least one of its components exceeds a high threshold $u\in \RR$.
The limiting distribution of this approach, if it exists \citep{roo2006}, is a (generalized) multivariate Pareto distribution 
\begin{align}
    \PP(\Y\le \y) = \lim_{u\to \infty} \PP\big(\X^* - u\ones \le \y \mid \max_{i\in [d]} X_i^* \ge u\big), \label{eq:MPD}
\end{align}
where $\ones=(1,\ldots,1)^\top$ is the all-ones vector. Under the assumption of asymptotic dependence, the distribution of $\Y$ is supported on the $L$-shaped space 
$$
\mathcal{L}:=\big\{\y\in\RR^d:\max_{i \in [d]} y_i > 0\big\}
$$ 
and has a probability density with respect to the Lebesgue measure on $\RR^d$.
The density of $\Y$ is proportional to the density $\lambda(\y)$ of the exponent measure corresponding to $\Y$, restricted to $\mathcal{L}$, see e.g.~\citet{engelke2024graphicalmodelsmultivariateextremes} for details.
When $\Y$ is restricted to a positive canonical halfspace $\mathcal{L}^k:=\{\y\in\mathcal{L}:y_k>0\}$ for some $k\in [d]$, the resulting vector $\Y^k:=\Y|\{Y_k>0\}$ has probability density $\lambda(\y)$ on its support $\mathcal{L}^k$. Thus, for this paper, we can treat $\lambda(\y)$ as a function whose restriction to any positive canonical halfspace gives the probability density of $\Y^k$.

For any subset $I\subseteq [d]$, the limiting distribution \eqref{eq:MPD} of a marginal vector $\X_I^*$ is again a multivariate Pareto vector $\Y_{(I)}$ whose exponent measure density is given by $\lambda_I(\y_I)=\int_{\RR^{d-|I|}}\lambda(\y)d\y_{[d]\setminus I}$; we note that $\Y_{(I)}$ typically follows a different distribution than the marginal vector $(Y_i)_{i\in I}$ of $\Y$. This notion of marginalization gives rise to conditional densities $\lambda(\y_A|\y_C):=\frac{\lambda_{AC}(\y_{AC})}{\lambda_{C}(\y_{C})}$, where we recall that $AC$ abbreviates the union $A\cup C\subseteq [d]$. Because $\Y^k$ has density $\lambda(\y)$ on its support $\mathcal{L}^k$ and because the latter is a product space, the conditional vector $\Y^k_A\vert\{\Y^k_C=\y_C\}$ has density $\lambda(\y_A|\y_C)$, for any nonempty disjoint sets $A,C\subset [d]$ with $k\in C$.
\\~\\

Since the support $\mathcal{L}$ of $\Y$ is not a product space, the standard notion of conditional independence is not meaningful for multivariate Pareto distributions.
\citet{engelkehitz} proposed an alternative based on factorization of the exponent measure density, which is equivalent to standard conditional independence for the collection of vectors $\{\Y^k\}_{k\in[d]}$.
\begin{defi}[{\cite[Definition~5, Proposition~1]{engelkehitz}}]\label{definition: extremal CI}
	Let $\Y$ be a multivariate Pareto vector with exponential margins, and denote its exponent measure density by $\lambda$. Let $A,B,C$ be a partition of the index set. We say that $\Y_A$ is \textit{extremal conditionally independent}
	of $\Y_B$ given $\Y_C$, denoted by $\Y_A \perp_e \Y_B \mid \Y_{C}$, if and only if 
	\begin{equation*}
		\lambda(\y)\lambda_C(\y_C) \,=\, \lambda_{AC}(\y_{AC})\lambda_{BC}(\y_{BC}) \quad\text{~for all $\mathbf{y}\in\mathcal{L}$.}
	\end{equation*}

\end{defi}
The extremal conditional independence condition can also be stated in terms of conditional densities as $\lambda(\y_{AB}|\y_C)=\lambda(\y_A|\y_C)\lambda(\y_B|\y_C)$. The notion of extremal conditional independence in Definition~\ref{definition: extremal CI} allows for flexible modeling of dependence structures in multivariate extremes. In particular, it gives rise to directed and undirected notions of extremal graphical models. If $G=(V,E)$ is an undirected graph, we say that a multivariate Pareto vector $\Y$ satisfies the \emph{undirected extremal global Markov property} with respect to $G$, when for any disjoint $A,B\subset V$ with separating set $C$, we have $\Y_{A}\perp_e\Y_B\mid \Y_C$; here, the margins $[d]$ are identified with the vertex set $V$. A vector $\Y$ satisfies the \textit{undirected extremal pairwise Markov property} with respect to $G$, when for any non-edge $ij\notin E$, we have $Y_i \perp_e Y_j \mid \Y_{V\backslash\{i,j\}}$.
For further details on extremal graphical models we refer to \citet{engelkehitz,REZ2023,RCG2023,HES2022,EGR2025}.

\subsection{H\"usler--Reiss distributions}\label{SUBSEC:HR}
In this article we focus on a parametric subclass of multivariate Pareto distributions, which can be considered as an extremal analogue to multivariate Gaussian distributions. Similar to how a Gaussian distribution is parametrized by its covariance matrix, a \textit{H\"usler--Reiss} multivariate Pareto distribution is parametrized by its \emph{variogram} $\Gamma$. A variogram is a conditionally negative definite matrix which, for $d$-variate distributions, is a matrix in the following set
$$
\mathcal{D}^d \,:=\, \big\{\Gamma \in \mathbb{R}^{d\times d}\,:\, \Gamma=\Gamma^T,~ \diag(\Gamma)=0\text{~and~} \mathbf{x}^T\Gamma\mathbf{x}<0~\forall \0\neq\mathbf{x}\perp\mathbf{1}\big\}.
$$
The exponent measure density of a H\"{u}sler--Reiss distribution with variogram $\Gamma$ is given by:
\begin{equation}\label{eq: HR exponent measure density}
\lambda(\y) \,:=\, \sqrt{\frac{-(2\pi)^{1-d}}{\det\CM( \Gamma )}}\exp\left(-\frac{1}{2}\begin{pmatrix}
    \y & 1 \end{pmatrix}\CM(\Gamma)^{-1}\begin{pmatrix}
    \y\\
    1
\end{pmatrix}\right) \quad\text{~where~}\quad \CM(\Gamma):=\begin{pmatrix}-\Gamma/2&\1\\\1^T&0\end{pmatrix}.
\end{equation}
Here, the matrix $\CM(\Gamma)$ is called the \textit{Cayley--Menger matrix}\footnote{The name ``Cayley--Menger matrix" is adopted from the context of distance geometry, where the matrix $\CM(-2D)$ associated to a Euclidean distance matrix $D$ plays an important role.} of $\Gamma$. An alternative way to parametrize H\"{u}sler--Reiss distributions is via its \emph{precision matrix} $\Theta$. This is a positive semidefinite matrix with $\ker(\Theta)=\spa(\1)$ and can be obtained from the variogram via the \emph{Fiedler--Bapat identity} \citep{devriendt2022a}
\begin{equation}\label{eq: Fiedler--Bapat identity}
\begin{pmatrix}
    \Theta & \p\\\p^T&\sigma^2
\end{pmatrix}\,=\,\begin{pmatrix}
-\Gamma/2 & \1\\\1^T&0
\end{pmatrix}^{-1},
\end{equation}
where the vector $\p$ and scalar $\sigma^2$ are fully determined by the precision matrix or variogram; see also Section~\ref{SUBSEC:MODULARITYSIGMA}. In terms of the precision matrix, the H\"{u}sler--Reiss exponent measure density is written as
\begin{equation}\label{eq: HR density precision matrix}
\lambda(\mathbf{y}) \,=\, \sqrt{(2\pi)^{1-d}\det( \Theta_{\backslash k,\backslash k})}\exp\left(-\frac{1}{2}\begin{pmatrix}\y&1\end{pmatrix}\begin{pmatrix}
\Theta&\p\\\p^T&\sigma^2
\end{pmatrix}\begin{pmatrix}
\y\\1 \end{pmatrix}\right),
\end{equation}
where $k$ can be any element of $[d]$ and we recall the abbreviation $\backslash k:=[d]\backslash k$.

H\"usler--Reiss distributions have the special property that their marginals $\Y_{(I)}$, i.e., the limiting distribution of $\X^*_I$ in \eqref{eq:MPD} for some $I\subseteq[d]$, again follow a H\"usler--Reiss distribution and, moreover, that the corresponding variogram is equal to the principal $I$-submatrix $\Gamma_{I,I}$ of $\Gamma$. For the corresponding precision matrix, we use the notation
$$
\begin{pmatrix}
\Theta(I)&\p(I) \\\p(I)^T&\sigma(I)^2
\end{pmatrix}\,:=\,\begin{pmatrix}
-\Gamma_{I,I}/2&\1\\\1^T&0
\end{pmatrix}^{-1}.
$$
We note that with this notation, we have $\Theta([d])=\Theta, \p([d])=\p$ and $\sigma^2([d])=\sigma^2$. In Section~\ref{SUBSEC:MODULARITYSIGMA} we will say more about how these $I$-dependent quantities can be computed for general subsets $I\subseteq [d]$.
\\~\\
The specific definition of the H\"usler--Reiss exponent measure density allows one to derive a closed-form expression for conditional exponent measure densities. For any two nonempty disjoint subsets $A,C\subset [d]$, 
the conditional exponent measure density $\lambda(\y_A | \y_{C})$ is Gaussian with covariance matrix and mean:
$$
	\Sigma^* =-\frac{1}{2}\Gamma_{A,A} -\begin{pmatrix}
	-\frac{1}{2}\Gamma_{A,C}& \1\\
	\end{pmatrix}
	\CM(\Gamma_{C,C})^{-1}
	\begin{pmatrix}
	-\frac{1}{2}\Gamma_{C,A}\\
	\1^{\top}\\
	\end{pmatrix} 
    \;\text{~and~}\;
    \mu^*=\begin{pmatrix}
	-\frac{1}{2}\Gamma_{A,C}& \1\\
	\end{pmatrix}\CM(\Gamma_{C,C})^{-1}\begin{pmatrix}
	\y_{C}\\
	1\\
	\end{pmatrix}, 
$$
see \citet[Appendix C, Lemma~2]{EGR2025}. Note that the covariance matrix $\Sigma^*$ is a Schur complement of the Cayley--Menger matrix, resembling the covariance structure of conditional Gaussian distributions.

Due to this characterization of $\lambda(\y_A|\y_C)$ as a Gaussian density, the H\"usler--Reiss distribution allows for a parametric description of arbitrary extremal conditional independence statements; we recall the notation $AB=A\cup B$ and $\backslash A=[d]\setminus A$.
When used as indices, we display a singleton set $\{i\}$ as $i$. 

\begin{lemma}[{\citet[Appendix C, Lemma~3]{EGR2025}, \citet[Proposition~3.3]{RCG2023}}]\label{LEM:APPENDIXCEGR}
Let $\Y$ be a Hüsler--Reiss vector with variogram $\Gamma$ and precision matrix $\Theta$. Then for any nonempty disjoint subsets $\{i\},\{j\},C\subset[d]$, the following are equivalent: 
\begin{enumerate}
    \item[(i)] $Y_i\perp_e Y_j\mid \Y_C$,
    \item[(ii)] $\Theta(Cij)_{ij} = 0,$ 
    \item[(iii)] $\det \left( \CM(\Gamma_{Ci,Cj})\right) \;=\; 0,$
    \item[(iv)] $\det\bigl(\Theta_{\backslash Ci,\backslash Cj}\bigr) = 0.$
\end{enumerate}
\end{lemma}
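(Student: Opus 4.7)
The plan is to prove the four equivalences in three steps, leveraging the Gaussian structure of H\"usler--Reiss conditional densities: (i) $\Leftrightarrow$ (ii) via Gaussian conditional independence, (ii) $\Leftrightarrow$ (iii) via Cramer's rule applied to the Fiedler--Bapat identity at the $Cij$-marginal, and (iii) $\Leftrightarrow$ (iv) via Jacobi's complementary minor identity.

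For (i) $\Leftrightarrow$ (ii), I would first pass to the marginal $\Y_{(Cij)}$, which is again H\"usler--Reiss with variogram $\Gamma_{Cij,Cij}$, and whose conditional density $\lambda(\y_{ij}\mid \y_C)$ is Gaussian with covariance $\Sigma^*$ given by the Schur complement formula stated just before the Lemma. By the block inverse formula applied to $\CM(\Gamma_{Cij,Cij})$, the inverse $(\Sigma^*)^{-1}$ is exactly the $\{i,j\}$ principal block of $\CM(\Gamma_{Cij,Cij})^{-1}$, which by the Fiedler--Bapat identity \eqref{eq: Fiedler--Bapat identity} coincides with $\Theta(Cij)_{ij,ij}$. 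Since $\mathcal{L}^k$ is a product space in the $\y_{ij}$ coordinates for any $k\in C$, extremal conditional independence $Y_i\perp_e Y_j\mid \Y_C$ reduces to classical Gaussian conditional independence, which is equivalent to $(\Sigma^*)^{-1}_{ij}=\Theta(Cij)_{ij}=0$.

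For (ii) $\Leftrightarrow$ (iii), I would apply Cramer's rule to the Fiedler--Bapat identity at the $Cij$-marginal level. The entry $\Theta(Cij)_{ij}$ equals $\pm\det M/\det\CM(\Gamma_{Cij,Cij})$, where $M$ is obtained by deleting row $i$ and column $j$ from $\CM(\Gamma_{Cij,Cij})$. A reordering of rows and columns identifies $M$ with $\CM(\Gamma_{Cj,Ci})$, whose determinant equals $\det\CM(\Gamma_{Ci,Cj})$ by transposition. Since $\CM(\Gamma_{Cij,Cij})$ is invertible for any valid variogram, the vanishing of $\Theta(Cij)_{ij}$ is equivalent to $\det\CM(\Gamma_{Ci,Cj})=0$.

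For (iii) $\Leftrightarrow$ (iv), I would apply Jacobi's complementary minor identity to $\CM(\Gamma)$ and its inverse. Indexing the Cayley--Menger matrix by $[d]\cup\{*\}$ and taking $I=\backslash Ci$, $J=\backslash Cj$ as subsets of $[d]$, Jacobi's identity reads
\[
\det\bigl(\CM(\Gamma)^{-1}\bigr)_{I,J} \;=\; \pm\,\frac{\det\CM(\Gamma)_{\backslash J,\backslash I}}{\det\CM(\Gamma)},
\]
where the complements on the right are taken inside $[d]\cup\{*\}$. By \eqref{eq: Fiedler--Bapat identity}, the left-hand side is $\det\Theta_{\backslash Ci,\backslash Cj}$, while the right-hand side equals $\pm\det\CM(\Gamma_{Cj,Ci})/\det\CM(\Gamma)=\pm\det\CM(\Gamma_{Ci,Cj})/\det\CM(\Gamma)$. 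Since $\det\CM(\Gamma)\neq 0$, the two determinants vanish simultaneously.

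The main obstacle I anticipate is the bookkeeping of index sets: keeping track of the interplay between subsets of $[d]$ and the extra ``constant'' coordinate appearing in the Cayley--Menger matrix, and the sign conventions in Cramer's and Jacobi's identities. Once these are organized, the proof reduces each equivalence to a standard fact from linear algebra or Gaussian graphical models.
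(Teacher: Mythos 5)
Your proposed proof is correct. Note, however, that the paper does not prove this lemma at all: it is imported by citation from \citet{EGR2025} and \citet{RCG2023}, so there is no internal argument to compare against. Your derivation is a sound, self-contained reconstruction along the standard lines those references use: (i)$\Leftrightarrow$(ii) by reducing extremal conditional independence on the marginal $\Y_{(Cij)}$ to classical independence of the bivariate Gaussian conditional density $\lambda(\y_{ij}\mid\y_C)$, whose inverse covariance is the $\{i,j\}$ block of $\Theta(Cij)$ by the block-inverse/Schur-complement formula; (ii)$\Leftrightarrow$(iii) by the cofactor formula for $\CM(\Gamma_{Cij,Cij})^{-1}$, using that the Cayley--Menger matrix of a valid variogram is invertible; (iii)$\Leftrightarrow$(iv) by Jacobi's complementary-minor identity applied to $\CM(\Gamma)$ indexed by $[d]\cup\{*\}$, with the extra index $*$ landing in both complements so that the complementary minor is exactly $\det\CM(\Gamma_{Cj,Ci})$. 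Two small points you should make explicit to be fully rigorous: first, since $\{i\},\{j\},C$ need not partition $[d]$, the statement $Y_i\perp_e Y_j\mid\Y_C$ must be read relative to the marginal $\Y_{(Cij)}$ (your opening move already does this, but it deserves a sentence); second, the sign ambiguities in Cramer's and Jacobi's identities and the row/column reorderings are harmless only because every equivalence in the lemma is a vanishing condition --- which you do note, and which is all that is needed.
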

In particular, for the case $Cij=[d]$ this characterization shows that for a H\"{u}sler--Reiss vector $\Y$, pairwise extremal conditional independence can be described by its precision matrix:
$$
Y_i \perp_e Y_j \mid \Y_{\backslash ij} \iff \Theta_{ij} = 0.
$$
Thus for a given graph $G=(V,E)$, the pairwise extremal Markov property for H\"{u}sler--Reiss graphical models is encoded by the condition that $ij\not\in E \Rightarrow \Theta_{ij}=0.$ This provides a parametric representation of the graphical model, see also \citet{HES2022}.

\begin{ex}[Four-cycle]\label{example: four-cycle}
Let $\Y$ be a H\"usler--Reiss graphical model with respect to the four-cycle $G$ as shown below, and let $\Theta$ be its precision matrix
$$
\raisebox{-2.4em}{\includegraphics[width=0.115\textwidth]{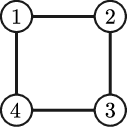}}
\quad\quad\quad\quad\quad\quad\quad\quad
\Theta=\begin{pmatrix}
        \Theta_{11}&\Theta_{12}&0&\Theta_{14}\\
        \Theta_{12}&\Theta_{22}&\Theta_{23}&0\\
        0&\Theta_{23}&\Theta_{33}&\Theta_{34}\\
        \Theta_{14}&0&\Theta_{34}&\Theta_{44}\\
    \end{pmatrix}.   
$$
The zeros $\Theta_{13}=0$ and $\Theta_{24}=0$ in the precision matrix encode the extremal conditional independencies $Y_1\perp_e Y_3 \mid \Y_{24} $ and $ Y_2\perp_e Y_4 \mid \Y_{13}$ prescribed by the non-edges $13,24\not\in E(G)$ of the graph. 
\end{ex}

To conclude this section, we mention a special subclass of H\"{u}sler--Reiss distributions. If the precision matrix $\Theta$ is an $M$-matrix, i.e., if all its off-diagonal entries are non-positive, then the corresponding H\"{u}sler--Reiss distribution is said to satisfy \textit{extremal multivariate total positivity of order 2}, abbreviated $\text{EMTP}_2$. This subclass was introduced in \citet{REZ2023} as an extremal analogue to strong positive dependence in the classical setting. The EMTP$_2$ property is preserved under marginalization, and it implies many additional properties. One reason for this is that an EMTP$_2$ precision matrix $\Theta$ can be interpreted as the Laplacian matrix of a positively weighted graph.

\section{Extremal conditional independence via Gaussian multiinformation}\label{sec:multiinfo}

We first recall the concept of multiinformation and some relevant results for this article; we refer to \cite{Studeny_2005} as our main reference.
\begin{defi}[Relative entropy]
Let $P$ and $Q$ be two probability measures with probability densities $p$ and $q$, defined over the same space $\mathcal{X}\subseteq \RR^d$ such that $P$ is absolutely continuous with respect to $Q$. Then, the \textit{relative entropy of $P$ with respect to $Q$} is defined as 
$$
H(P| Q) := \int_{\mathcal{X}}p(x)\log\left(\frac{p(x)}{q(x)}\right)dx.
$$
\end{defi} 
Let $\X$ be a $d$-variate random vector that follows a continuous probability measure $P$ with density $p$. Denote by $P_I$ and $p_I$ the probability measure and density of the marginal vector $\X_I$ for any $I\subseteq [d]$. Given a partition $A, B, C$ of $[d]$, the vector $\X_A$ is conditionally independent of $\X_B$ given $\X_C$ if and only if $p(\x_{AB}\vert \x_C) = p(\x_A\vert \x_C)p(\x_B\vert \x_C)$. We denote this by $\X_A \indep \X_B | \X_C$. 

\begin{defi}[Multiinformation]\label{def:multiinfo}
Let $P$ be a probability measure defined over a space $\mathcal{X}\subseteq \RR^d$. The \textit{multiinformation} $m_P$ of a subset $I\subseteq [d]$ is defined as:
$$
m_P(I) := H(P_{I}|\prod\limits_{i \in I}P_i).
$$
\end{defi}
We note that this concept was originally defined in \citet{Watanabe1960TotalCorrelation} under the name total correlation. The multiinformation function allows for an alternative description of conditional independence statements in terms of modularity equations, as illustrated by the following theorem:

\begin{thm}[{\citet[Corollary~2.2]{Studeny_2005}}]\label{THM:CIMULTI}
Let $\X$ be a random vector with probability measure $P$ and finite multiinformation. Then for any disjoint $A,B,C\subset [d]$ the multiinformation is supermodular
$$
m_P(ABC) + m_P(C)  \geq m_P(AC) + m_P(BC),
$$ 
with equality ($=$ modular) if and only if the subsets satisfy the conditional independence relation:
\begin{equation}\label{eq:multiinfo-modular}
m_P(ABC) + m_P(C) - m_P(AC) - m_P(BC) = 0 \iff \X_A \indep \X_B \mid \X_C. 
\end{equation}
\end{thm}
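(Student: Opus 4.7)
The plan is to expand the four multiinformation terms via their definition as relative entropies, observe that the product-of-margin pieces telescope thanks to the disjointness of $A, B, C$, and identify the surviving quantity as the conditional mutual information, whose non-negativity (Gibbs' inequality) yields supermodularity and whose vanishing characterizes conditional independence.

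Concretely, let $p_I$ denote the density of the marginal vector $\X_I$ and $p_i$ the univariate marginal density of $X_i$. Using the identity $\int p_{ABC}(\x_{ABC}) f(\x_I)\, d\x_{ABC} = \int p_I(\x_I) f(\x_I)\, d\x_I$ whenever $I \subseteq ABC$, each of $m_P(ABC), m_P(AC), m_P(BC), m_P(C)$ can be rewritten as an integral against the single measure $p_{ABC}(\x_{ABC})\, d\x_{ABC}$. Since $A, B, C$ are pairwise disjoint we have $AC \cup BC = ABC$ and $AC \cap BC = C$, so
$$\prod_{i \in AC} p_i(x_i) \cdot \prod_{i \in BC} p_i(x_i) \;=\; \prod_{i \in ABC} p_i(x_i) \cdot \prod_{i \in C} p_i(x_i),$$
which makes the univariate-margin contributions cancel in the signed combination $m_P(ABC) + m_P(C) - m_P(AC) - m_P(BC)$. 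What remains is
$$\int p_{ABC}(\x_{ABC}) \log \frac{p_{ABC}(\x_{ABC})\, p_C(\x_C)}{p_{AC}(\x_{AC})\, p_{BC}(\x_{BC})}\, d\x_{ABC},$$
which is precisely the relative entropy $H(p_{ABC} \,\|\, p_{AC}\,p_{BC}/p_C)$, i.e.\ the conditional mutual information $I(\X_A; \X_B \mid \X_C)$.

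Gibbs' inequality (non-negativity of relative entropy) then yields the supermodular inequality immediately, and equality holds if and only if $p_{ABC}(\x_{ABC})\, p_C(\x_C) = p_{AC}(\x_{AC})\, p_{BC}(\x_{BC})$ almost everywhere, which is the factorization characterizing $\X_A \indep \X_B \mid \X_C$. The main subtlety is justifying the recombination of the four log-terms: the finite-multiinformation hypothesis guarantees that each term is integrable against $p_{ABC}$, so the signed sum is well-defined in $L^1(p_{ABC})$ and the cancellation is rigorous rather than merely formal. Once this integrability is in hand, the remainder is density bookkeeping and an invocation of Gibbs.
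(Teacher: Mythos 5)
Your argument is correct: the signed combination $m_P(ABC)+m_P(C)-m_P(AC)-m_P(BC)$ does reduce, after the univariate-margin factors cancel, to the conditional mutual information $H\bigl(P_{ABC}\,\big|\,P_{AC}P_{BC}/P_C\bigr)$, whose nonnegativity and vanishing characterize $\X_A \indep \X_B \mid \X_C$. The paper gives no proof of this statement---it is quoted from \citet[Corollary~2.2]{Studeny_2005}---and your derivation is essentially the standard one found there, so there is nothing to contrast; the only point worth tightening is that finiteness of $m_P$ for the full index set implies finiteness of $m_P(I)$ for every subset $I$ (by monotonicity of multiinformation), which is what licenses splitting and recombining the four integrals.
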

Note that for an empty conditioning set $C$ in \eqref{eq:multiinfo-modular}, the marginal independence statement $\X_A \indep \X_B$ is equivalent to
$$
m_P(AB) - m_P(A) - m_P(B) = 0 \quad\text{~for any disjoint $A,B\subset [d]$.}
$$

\begin{ex}[Gaussian distributions]
 Let $\X$ be a $d$-variate Gaussian distribution with correlation matrix $R$, the multiinformation function has the explicit expression
\begin{align}
    m_P(I) = -\frac{1}{2}\log(\det(R_{I,I})), \label{eq:GaussianMI}
\end{align}
for any $I\subseteq [d]$.
Thus, for any disjoint $A,B,C\subseteq [d]$ we obtain that $\X_A\indep \X_B \mid \X_C$ is equivalent to
\[-\frac{1}{2}\log(\det(R_{ABC,ABC})) -\frac{1}{2}\log(\det(R_{C,C})) +\frac{1}{2}\log(\det(R_{AC,AC})) +\frac{1}{2}\log(\det(R_{BC,BC})) = 0.\]
\end{ex}

\subsection{Extremal conditional independence for H\"usler--Reiss distributions}
In this section, we introduce a set function that encodes extremal conditional independence for H\"usler--Reiss distributions via modularity, similar to Theorem \ref{THM:MULTIINFO}. Due to the shape of the support $\mathcal{L}$ of a H\"usler--Reiss vector $\Y$, densities can never be fully factorized and the standard definition of multiinformation cannot be used. As an alternative, we propose a set function that encodes extremal conditional independence relations for H\"usler--Reiss distributions based on the Gaussianity of conditional exponent measure densities (see \Cref{SUBSEC:HR}). 
\begin{defi}
Let $\Y$ be a H\"usler--Reiss vector with variogram $\Gamma$. For $\emptyset\neq I\subseteq [d]$, we define $m^{\HR}(I)$ as
$$
m^{\HR}(I) \,:=\, -\frac{1}{2}\log\det\begin{pmatrix}
    -\Gamma_{I,I}/2 & \ones\\
    -\ones^\top &0\\
\end{pmatrix},
$$
and we let $m^{\HR}(\emptyset):=0$.
\end{defi}

Note that $m^{\HR}$ is not the actual multiinformation for the H\"{u}sler--Reiss distribution, as its support is not a product space. We start by showing a few basic properties of this function.
\begin{prop}\label{prop: properties of mHR}
The function $m^{\HR}$ satisfies
\begin{itemize}
    \item[(i)] $m^{\HR}(i)=0$ for all $i\in I$,
    \item[(ii)] $m^{\HR}(ij)=-\frac{1}{2}\log(\Gamma_{ij})$ for all $ij\subseteq [d]$,
    \item[(iii)] $m^{\HR}(I)-m^{\HR}(I\backslash i) = \frac{1}{2}\log(\Theta(I)_{ii})$ for all $i\in I\subseteq[d]$ of size $\vert I\vert>1$.
\end{itemize}
\end{prop}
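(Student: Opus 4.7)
The plan is to verify the three identities in order, handling (i) and (ii) by direct determinant computations and (iii) via the cofactor formula for the inverse combined with the Fiedler--Bapat identity \eqref{eq: Fiedler--Bapat identity}. Throughout, denote by $K(I)$ the $(|I|+1)\times(|I|+1)$ matrix appearing in the definition of $m^{\HR}(I)$, so that $m^{\HR}(I)=-\tfrac12\log\det K(I)$.

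For (i), when $|I|=1$ the diagonal of $\Gamma$ vanishes, so $K(\{i\})=\left(\begin{smallmatrix}0&1\\-1&0\end{smallmatrix}\right)$, which has determinant $1$; hence $m^{\HR}(i)=0$. For (ii), expanding $\det K(\{i,j\})$ along the last row (a routine $3\times 3$ computation) gives $\Gamma_{ij}$, so $m^{\HR}(ij)=-\tfrac12\log\Gamma_{ij}$.

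For (iii), the key observation is that deleting the row and column indexed by $i\in I$ from $K(I)$ affects only the top-left block $-\Gamma_{I,I}/2$, leaving the border row and column $\pm\1$ intact; the resulting matrix is exactly $K(I\setminus i)$. The cofactor formula for the inverse therefore yields
\[
(K(I)^{-1})_{ii}=\frac{\det K(I\setminus i)}{\det K(I)},
\]
so that $m^{\HR}(I)-m^{\HR}(I\setminus i)=\tfrac12\log (K(I)^{-1})_{ii}$. It remains to identify this entry with $\Theta(I)_{ii}$. Since $K(I)$ is obtained from $\CM(\Gamma_{I,I})$ by negating the last row, we can write $K(I)=D\cdot \CM(\Gamma_{I,I})$ with $D=\diag(1,\ldots,1,-1)$, whence $K(I)^{-1}=\CM(\Gamma_{I,I})^{-1}\cdot D$. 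The Fiedler--Bapat identity \eqref{eq: Fiedler--Bapat identity} applied to the subvariogram $\Gamma_{I,I}$ then shows that the top-left $|I|\times|I|$ block of $K(I)^{-1}$ equals $\Theta(I)$, so in particular $(K(I)^{-1})_{ii}=\Theta(I)_{ii}$, completing the proof.

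The only point that requires care is sign bookkeeping: the matrix in the definition of $m^{\HR}$ is not exactly the Cayley--Menger matrix $\CM(\Gamma_{I,I})$ of \eqref{eq: HR exponent measure density} but its negation along the last row. One must check that the right-multiplication by $D$ confines the sign flip to the last column of $K(I)^{-1}$, so that the top-left $|I|\times|I|$ block—and hence its diagonal entries indexed by $i\in I$—agrees with $\Theta(I)$ rather than picking up an extraneous minus sign.
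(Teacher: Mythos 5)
Your proof is correct and follows essentially the same route as the paper's: direct determinant computations for (i) and (ii), and for (iii) the cofactor formula for the inverse combined with the Fiedler--Bapat identity, the only difference being that you spell out the sign bookkeeping between the matrix defining $m^{\HR}$ and the Cayley--Menger matrix (which the paper leaves implicit, since the sign flip cancels in the ratio of determinants). No gaps.
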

\begin{proof}
Expressions (i) and (ii) follow by direct computation. Expression (iii) follows by writing out the difference on the lefthandside as
$$
m^{\HR}(I)-m^{\HR}(I\backslash i) = \frac{1}{2}\log\left(\frac{\det\CM(\Gamma_{I\backslash i,I\backslash i})}{\det\CM(\Gamma_{I,I})}\right) = \frac{1}{2}\log(\Theta(I)_{ii}),
$$
where the last step follows by the Fiedler--Bapat identity, and the cofactor formula for matrix inversion.
\end{proof}
\begin{remark}
Property (ii) in Proposition \ref{prop: properties of mHR} implies that the set function $m^{\HR}$ contains all information about the parameterization: all entries of the variogram can be retrieved by the transformation $\Gamma_{ij}=e^{-2m^{\HR}(ij)}$. As a result, one may think of the function $m^{\HR}$ on subsets of $[d]$ as yet another parameterization.
\end{remark}

We note that $m^{\HR}$ is not standardized to nonnegative values.
We can derive a standardization for any $k\in I$. Let 
\[
\Sigma^{(k)} := -\frac{1}{2}\Gamma_{I\setminus k,I\setminus k} -\begin{pmatrix}
	-\frac{1}{2}\Gamma_{I,k}& \1\\
	\end{pmatrix}
	\CM(\Gamma_{k,k})^{-1}
	\begin{pmatrix}
	-\frac{1}{2}\Gamma_{k,I}\\
	\1^{\top}\\
\end{pmatrix} 
\]
be the covariance matrix of the conditional H\"usler--Reiss exponent measure density $\lambda(\y_{I\setminus k}|y_k)$.
Then, by standard Schur complement arguments, $m^{\HR} (I)=-\frac{1}{2}\log\det (\Sigma^{(k)})$.
If we standardize $\Sigma^{(k)}$ to a correlation matrix, the resulting Gaussian multiinformation is equal to
$m^{\HR}(I)+\frac{1}{2}\sum_{i\in I\setminus k}\log \left(\Sigma^{(k)}_{ii}\right)$,
which is nonnegative. As this function depends on $k$ and our interest is in extremal conditional independence, we omitted this standardization in the definition of $m^{\HR}$. 

In addition to the alternative representation of $m^{\HR}$ based on the covariance matrix $\Sigma^{(k)}$, we collect further alternative representations in the following lemma.
\begin{lemma}
The following expressions are equivalent for the function $m^{\HR}$ and any nonempty $I\subseteq[d]$:
\begin{enumerate}
\item[(i)] $m^{\HR}(I) \,=\, -\frac{1}{2}\log\big(-\det\CM(\Gamma_{I,I})\big),$
\item[(ii)] $m^{\HR}(I)=\frac{1}{2}\log\det(\Theta(I)_{\backslash k,\backslash k})$ for any $k\in I$,
\item[(iii)] $m^{\HR}(I)=\frac{1}{2}\log\operatorname{Det}(\Theta(I))-\frac{1}{2}\log (|I|)$, where $\operatorname{Det}$ is the pseudo-determinant, i.e.~the product of all nonzero eigenvalues.
\item[(iv)] $
m^{\HR}(I) = -\log\int_{y_k>0} \exp\left(-\frac{1}{2}\begin{pmatrix}
    \y_I & 1 \end{pmatrix}\CM(\Gamma_{I,I})^{-1}\begin{pmatrix}
    \y_I\\
    1
\end{pmatrix}\right) d\y_I+\frac{|I|-1}{2}\log(2\pi)$ for any $k\in I$.
\item[(v)] $m^{\HR}(I) = \tfrac{1}{2}\log\big(\sum_T \prod_{ij\in E(T)}(-\Theta(I)_{ij})\big)$, where $T$ runs over all spanning trees of the graph corresponding to $\Theta(I)$.
    \end{enumerate}
\end{lemma}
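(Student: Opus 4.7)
The plan is to establish the chain $(\mathrm{def})\Leftrightarrow(\mathrm{i})\Leftrightarrow(\mathrm{ii})\Leftrightarrow(\mathrm{iii})$ by linear-algebraic identities, then derive $(\mathrm{iv})$ from the fact that $\lambda$ is normalized on each positive canonical halfspace, and finally obtain $(\mathrm{v})$ from $(\mathrm{ii})$ via the (signed) matrix-tree theorem.

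First I would observe that the matrix in the definition of $m^{\HR}(I)$ differs from $\CM(\Gamma_{I,I})$ only by negation of the last row, so its determinant equals $-\det\CM(\Gamma_{I,I})$. This proves $(\mathrm{i})$ directly, and also shows that the logarithm is well defined: a direct Schur-complement computation (or a check on $|I|=2$ combined with the fact that $\Gamma_{I,I}\in\mathcal{D}^{|I|}$) gives $-\det\CM(\Gamma_{I,I})>0$, which is a quick sanity check to record. Next, for $(\mathrm{ii})$, I would apply Jacobi's complementary-minor identity to $M=\CM(\Gamma_{I,I})$ with the row/column set $S=I\setminus\{k\}$: the complementary minor is the $2\times 2$ submatrix $\bigl(\begin{smallmatrix} -\Gamma_{kk}/2 & 1\\ 1 & 0\end{smallmatrix}\bigr)=\bigl(\begin{smallmatrix}0&1\\1&0\end{smallmatrix}\bigr)$ of determinant $-1$, and the Fiedler--Bapat identity~\eqref{eq: Fiedler--Bapat identity} identifies $(M^{-1})_{S,S}$ with $\Theta(I)_{\backslash k,\backslash k}$. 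This yields $\det(\Theta(I)_{\backslash k,\backslash k})=-1/\det\CM(\Gamma_{I,I})$, and taking $-\tfrac{1}{2}\log$ converts $(\mathrm{i})$ into $(\mathrm{ii})$. As a byproduct, all principal minors $\det(\Theta(I)_{\backslash k,\backslash k})$ are equal, as expected for a matrix with $\Theta(I)\1=0$.

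For $(\mathrm{iii})$ I would use the standard fact that if $L$ is an $n\times n$ symmetric matrix with $L\1=0$, then $\operatorname{Det}(L)=n\cdot\det(L_{\backslash k,\backslash k})$, which follows by computing $\det(L+\tfrac{1}{n}\1\1^T)$ two ways. Applied to $\Theta(I)$ with $n=|I|$ this rewrites $(\mathrm{ii})$ as $(\mathrm{iii})$. For $(\mathrm{iv})$, the quadratic form in the integrand expands, via Fiedler--Bapat, to $\y_I^T\Theta(I)\y_I+2\y_I^T\p(I)+\sigma(I)^2$, which is precisely the exponent in the H\"usler--Reiss density~\eqref{eq: HR density precision matrix} for the marginal $\Y_{(I)}$ with variogram $\Gamma_{I,I}$. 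Since $\lambda_I$ integrates to $1$ on the halfspace $\{y_k>0\}$, collecting the normalizing prefactor $\sqrt{-(2\pi)^{1-|I|}/\det\CM(\Gamma_{I,I})}$ and taking logarithms gives exactly $(\mathrm{iv})$. Finally, $(\mathrm{v})$ is an application of the matrix-tree theorem (in its signed form, valid for any symmetric matrix with zero row sums) to $\Theta(I)$: it expresses any principal cofactor as the sum over spanning trees of the product of the negated off-diagonals, and substitution into $(\mathrm{ii})$ yields $(\mathrm{v})$.

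I expect the main subtlety to be in $(\mathrm{iv})$, since $\Theta(I)$ is singular and the integrand is not literally a Gaussian density on $\RR^{|I|}$; the cleanest way around this is to invoke the normalization of the exponent measure density on $\mathcal{L}^k$ rather than trying to diagonalize on $\1^{\perp}$ and integrate the ``free'' direction $t\1$ by hand. The other steps reduce to the Jacobi identity, the pseudo-determinant-cofactor formula, and the matrix-tree theorem, all of which are routine once the Fiedler--Bapat identity is in place.
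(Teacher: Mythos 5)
Your proposal is correct and follows essentially the same route as the paper: (i) by the row-negation/determinant identity, (ii) by relating $\det(\Theta(I)_{\backslash k,\backslash k})$ to $\det\CM(\Gamma_{I,I})$ via the Fiedler--Bapat identity (the paper phrases this through the Schur complement $\Sigma^{(k)}$ and its inverse, which is the same computation as your Jacobi complementary-minor argument), (iii) by the pseudo-determinant--cofactor identity, (iv) by normalization of the exponent measure density on $\mathcal{L}^k$, and (v) by the weighted matrix-tree theorem. The only cosmetic differences are that you prove the pseudo-determinant identity rather than citing it and you invoke Jacobi's identity by name.
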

\begin{proof}
    Expression {(i)} follows from the definition of $\CM(\Gamma)$ and basic identities for determinants. Expression (ii) follows by observing that $\Theta(I)_{\backslash k,\backslash k}$ is the inverse of the matrix $\Sigma^{(k)}$ defined above. For expression (iii), we use the identity $\operatorname{Det}(\Theta(I))=|I|\det (\Theta(I)_{\backslash k,\backslash k})$, as derived in \cite[Eq. (23)]{REZ2023}.
    Expression (iv) follows from the fact that the H\"usler--Reiss exponent measure density \eqref{eq: HR density precision matrix} is a probability density on each canonical halfspace $\mathcal{L}^k$, and thus
    $$
    \int_{y_k>0}\exp\left(-\tfrac{1}{2}\begin{pmatrix}\y_I&\1
        
    \end{pmatrix}^T\CM(\Gamma_{I,I})^{-1}\begin{pmatrix}
        \y_I\\\1
    \end{pmatrix}\right)d\y_I = \left((2\pi)^{1-|I|}\det( \Theta(I)_{\backslash k,\backslash k})\right)^{-\frac{1}{2}}.
    $$
    Introducing (ii) then leads to expression (iv). Finally, expression (v) follows from (ii) and the weighted matrix-tree theorem, see e.g.~\cite{REZ2023}.
\end{proof}

In the following theorem, we show that $m^{\HR}$ is supermodular and can encode arbitrary extremal conditional independence for H\"usler--Reiss distributions via modularity.
\begin{thm}\label{THM:MULTIINFO}
Let $\Y$ be a H\"usler--Reiss vector. Then for any nonempty disjoint $A,B,C\subset [d]$, the function $m^{\HR}$ is supermodular
$$
m^{\HR}(ABC) + m^{\HR}(C) \ge m^{\HR}(AC) + m^{\HR}(BC),
$$
with equality ($=$ modular) if and only if the subsets satisfy the extremal conditional independence relation
\begin{equation*}
m^{\HR}(ABC) + m^{\HR}(C) = m^{\HR}(AC) + m^{\HR}(BC) \iff \Y_A \perp_e \Y_B \mid \Y_{C}.
\end{equation*}
\end{thm}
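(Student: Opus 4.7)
The plan is to reduce both claims to standard facts about Gaussian distributions, exploiting that conditional exponent measure densities of H\"usler--Reiss vectors are Gaussian (Section~\ref{SUBSEC:HR}). First, I would pass WLOG to the case $ABC=[d]$: since marginals of H\"usler--Reiss distributions are again H\"usler--Reiss with variogram $\Gamma_{ABC,ABC}$, restricting to $\Y_{(ABC)}$ changes neither $m^{\HR}$ on subsets of $ABC$ nor the statement $\Y_A\perp_e\Y_B\mid\Y_C$.

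The central identity I would establish is that for any nonempty $C\subsetneq I\subseteq[d]$,
\begin{equation}\label{eq:plan-key}
m^{\HR}(I)-m^{\HR}(C)=-\tfrac{1}{2}\log\det\Sigma^*(I\setminus C\mid C),
\end{equation}
where $\Sigma^*(S\mid T)$ denotes the covariance matrix of the Gaussian conditional density $\lambda(\y_S\mid\y_T)$ recalled in Section~\ref{SUBSEC:HR}. I would derive this by partitioning the rows and columns of $\CM(\Gamma_{I,I})$ into the block indexed by $I\setminus C$ and the block consisting of the $C$-indices together with the bordering row/column $(\1^T,0)$; the latter block is exactly $\CM(\Gamma_{C,C})$, and the explicit formula for $\Sigma^*$ given in Section~\ref{SUBSEC:HR} identifies $\Sigma^*(I\setminus C\mid C)$ as precisely the Schur complement of this block inside $\CM(\Gamma_{I,I})$. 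The usual Schur determinant identity $\det\CM(\Gamma_{I,I})=\det\CM(\Gamma_{C,C})\cdot\det\Sigma^*(I\setminus C\mid C)$, together with expression~(i) for $m^{\HR}$, then yields \eqref{eq:plan-key}.

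Applying \eqref{eq:plan-key} with $(I,C)=(ABC,AC)$ and with $(I,C)=(BC,C)$ and subtracting gives
$$m^{\HR}(ABC)+m^{\HR}(C)-m^{\HR}(AC)-m^{\HR}(BC)=\tfrac{1}{2}\log\frac{\det\Sigma^*(B\mid C)}{\det\Sigma^*(B\mid AC)}.$$
I would then identify $\Sigma^*(B\mid C)$ as the $B$-diagonal block of $\Sigma^*(AB\mid C)$, since marginalizing the Gaussian $\lambda(\y_{AB}\mid\y_C)$ over $\y_A$ returns $\lambda(\y_B\mid\y_C)$; and $\Sigma^*(B\mid AC)$ as the Schur complement of the $A$-block of $\Sigma^*(AB\mid C)$, since conditioning the same Gaussian on $\y_A$ returns $\lambda(\y_B\mid\y_{AC})$.

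Supermodularity then follows from Fischer's determinant inequality applied to the positive definite matrix $\Sigma^*(AB\mid C)$: the determinant of its $B$-block is at least the determinant of the Schur complement with respect to its $A$-block. Equality holds iff the off-diagonal $(A,B)$-block of $\Sigma^*(AB\mid C)$ vanishes, equivalently iff the Gaussian density $\lambda(\y_{AB}\mid\y_C)$ factors as $\lambda(\y_A\mid\y_C)\lambda(\y_B\mid\y_C)$ for all $\y$, i.e.\ iff $\Y_A\perp_e\Y_B\mid\Y_C$. The main step to handle carefully is the Schur complement identity \eqref{eq:plan-key}; the bordering structure of $\CM$ initially looks awkward, but it matches precisely the form of $\Sigma^*$ already recorded in Section~\ref{SUBSEC:HR}, reducing the derivation to a direct block-determinant expansion.
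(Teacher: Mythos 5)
Your proposal is correct and follows essentially the same route as the paper: both reduce to the Gaussian conditional density $\lambda(\y_{AB}\mid\y_C)$, use the Schur-complement determinant identity $\det\CM(\Gamma_{I,I})=\det\CM(\Gamma_{C,C})\det\Sigma^*$ to express the modular defect in terms of $\det\Sigma^*$ and its blocks, and conclude with a Gaussian determinant inequality. The only cosmetic difference is that the paper normalizes $\Sigma^*$ to a correlation matrix and invokes Theorem~\ref{THM:CIMULTI}, whereas you apply Fischer's inequality and the block-covariance factorization criterion directly; for Gaussians these are the same fact.
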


\begin{proof}
We assume without loss of generality that $A,B,C$ is a partition of the index set $[d]$. The conditional exponent measure density $\lambda(\y_{AB}\mid\y_C)=\frac{\lambda(\y)}{\lambda(\y_C)}$ of a H\"usler--Reiss vector $\Y$ with variogram $\Gamma$ and exponent measure density $\lambda(\y)$ is an $|AB|$-variate Gaussian with covariance matrix
\begin{align*}
	\Sigma^*&=-\frac{1}{2}\Gamma_{AB,AB} -\begin{pmatrix}
	-\frac{1}{2}\Gamma_{AB,C}& \1\\
	\end{pmatrix}
	\begin{pmatrix}
	-\frac{1}{2}\Gamma_{C,C} & \mathbf{1}\\
	\mathbf{1}^{\top}&0\\
	\end{pmatrix}^{-1}
	\begin{pmatrix}
	-\frac{1}{2}\Gamma_{C,AB}\\
	\1^{\top}\\
	\end{pmatrix},
\end{align*}
see Section~\ref{SUBSEC:EXTREMES}. Let $R^*$ be the correlation matrix of $\Sigma^*$ and note that $\Sigma^*=d_{\Sigma^*}R^*d_{\Sigma^*}$ where $d_{\Sigma^*}:=\sqrt{\diag(\Sigma^*)}$. 
As a result, we have 
\begin{align*}
\log\det(R^*)&=\log\det(\Sigma^*)-\sum_{i\in AB}\log(\Sigma^*_{ii})
\\
\log\det(R^*_{A,A})&=\log\det(\Sigma^*_{A,A})-\sum_{i\in A}\log(\Sigma^*_{ii}).
\end{align*}
Using standard properties of Schur complements, we furthermore find that
$$
\det(\Sigma^*) = \frac{\det\CM(\Gamma)}{\det\CM(\Gamma_{C,C})}
\quad\text{~and~}\quad
\det(\Sigma^*_{A,A}) = \frac{\det\CM(\Gamma_{AC,AC})}{\det\CM(\Gamma_{C,C})}.
$$
Together, we thus find that $-\frac{1}{2}\log\det (\Sigma^*)= m^{\HR}(ABC)-m^{\HR}(C)$ and similarly $-\frac{1}{2}\log\det (\Sigma^*_{A,A})= m^{\HR}(AC)-m^{\HR}(C)$.
By \Cref{THM:CIMULTI} and these identities, it then holds that 
\begin{align*}
    & -\frac{1}{2}\log(\det(R^*))+\frac{1}{2}\log(\det(R^*_{A,A}))+\frac{1}{2}\log(\det(R^*_{B,B}))\ge 0
    \\
    \Longleftrightarrow& -\frac{1}{2}\log(\det(\Sigma^*))+\frac{1}{2}\log(\det(\Sigma^*_{A,A}))+\frac{1}{2}\log(\det(\Sigma^*_{B,B}))\ge 0
    \\
    \Longleftrightarrow&~~ m^{\HR}(ABC)-m^{\HR}(C)- (m^{\HR}(AC)-m^{\HR}(C)) - (m^{\HR}(BC)-m^{\HR}(C))\ge 0
    \\
    \Longleftrightarrow&~~ m^{\HR}(ABC)+m^{\HR}(C) -m^{\HR}(AC)- m^{\HR}(BC)\ge 0,
\end{align*}
which confirms that $m^{\HR}$ is supermodular.

Finally, since the conditional exponent measure density is the density of an $|AB|$-variate Gaussian with correlation matrix $R^*$, we know by Theorem~\ref{THM:CIMULTI} that extremal conditional independence $\Y_A \perp_e \Y_B \mid \Y_{C}$ is equivalent to the equality
$$
-\frac{1}{2}\log\det(R^*)+\frac{1}{2}\log\det(R^*_{A,A})+\frac{1}{2}\log\det(R^*_{B,B})=0.
$$
Rewriting the lefthandside as above using $m^{\HR}$, this establishes the desired equivalence:
\begin{align*}
    m^{\HR}(ABC)+m^{\HR}(C) -m^{\HR}(AC)- m^{\HR}(BC)=0 \iff \Y_A\perp_e \Y_B \mid \Y_C.
\end{align*}
\end{proof}

\begin{cor}\label{COR:HRMI}
Under the same conditions as \Cref{THM:MULTIINFO}, we have $\Y_A \perp_e \Y_B \mid \Y_{C}$ if and only if
\begin{equation*}
    \det\CM(\Gamma_{ABC,ABC})\det\CM(\Gamma_{C,C}) \,-\,\det\CM(\Gamma_{AC,AC})\det\CM(\Gamma_{BC,BC}) \,=\, 0.
\end{equation*}
\end{cor}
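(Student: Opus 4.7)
The plan is to derive this corollary as an immediate algebraic consequence of Theorem \ref{THM:MULTIINFO}, by unwinding the definition of $m^{\HR}$ into a determinantal expression. The first step is to pass from the defining matrix $\begin{pmatrix} -\Gamma_{I,I}/2 & \mathbf{1}\\ -\mathbf{1}^T & 0\end{pmatrix}$ appearing in $m^{\HR}(I)$ to the Cayley--Menger matrix $\CM(\Gamma_{I,I})$: negating the last row flips the sign of the determinant, so that $m^{\HR}(I) = -\tfrac{1}{2}\log\bigl(-\det\CM(\Gamma_{I,I})\bigr)$, which is exactly identity (i) of the alternative representations lemma. Since $\Gamma_{I,I}\in\mathcal{D}^{|I|}$ for each nonempty $I\subseteq[d]$, the quantity $-\det\CM(\Gamma_{I,I})$ is strictly positive, so the logarithm is well defined.

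Next, I would substitute this identity into the modularity equality provided by Theorem \ref{THM:MULTIINFO}. The equivalence
\[
m^{\HR}(ABC)+m^{\HR}(C) \;=\; m^{\HR}(AC)+m^{\HR}(BC) \iff \Y_A\perp_e \Y_B\mid \Y_C
\]
becomes
\[
-\tfrac{1}{2}\log\bigl(-\det\CM(\Gamma_{ABC,ABC})\bigr)-\tfrac{1}{2}\log\bigl(-\det\CM(\Gamma_{C,C})\bigr) \;=\; -\tfrac{1}{2}\log\bigl(-\det\CM(\Gamma_{AC,AC})\bigr)-\tfrac{1}{2}\log\bigl(-\det\CM(\Gamma_{BC,BC})\bigr).
\]
Multiplying by $-2$ and exponentiating, the two minus signs on each side cancel pairwise, yielding
\[
\det\CM(\Gamma_{ABC,ABC})\,\det\CM(\Gamma_{C,C}) \;=\; \det\CM(\Gamma_{AC,AC})\,\det\CM(\Gamma_{BC,BC}),
\]
which rearranges to the claimed vanishing.

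There is no genuine obstacle here; the only thing to be careful about is the sign bookkeeping between the matrix used in the definition of $m^{\HR}$ and the Cayley--Menger matrix $\CM(\Gamma)$, and the fact that all logarithms are evaluated at positive quantities so that exponentiation is valid. Once these are checked, the corollary follows from Theorem \ref{THM:MULTIINFO} by a one-line computation.
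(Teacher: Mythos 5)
Your proposal is correct and matches the paper's (implicit) argument: the corollary is meant to follow directly from Theorem~\ref{THM:MULTIINFO} by rewriting $m^{\HR}(I)=-\tfrac{1}{2}\log\bigl(-\det\CM(\Gamma_{I,I})\bigr)$ (representation (i)) and exponentiating, with the four minus signs cancelling in pairs exactly as you describe. Your additional remark that $-\det\CM(\Gamma_{I,I})>0$ for $\Gamma_{I,I}\in\mathcal{D}^{|I|}$ is the right point to check for the exponentiation step and is consistent with the well-definedness of $m^{\HR}$ assumed throughout the paper.
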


To illustrate Theorem~\ref{THM:MULTIINFO} and Corollary~\ref{COR:HRMI} we return to our running example.
\begin{ex}
    Let $\Y$ be a H\"usler--Reiss graphical model with respect to the four-cycle as in Example~\ref{example: four-cycle}. This means that $\Y$ satisfies two extremal conditional independence statements $Y_1\perp_e Y_3\mid \Y_{24}$ and $Y_2\perp_e Y_4\mid \Y_{13}$. In terms of the result of \Cref{THM:MULTIINFO}, we get the following equalities:
    \begin{align*}
        m^{\HR}(1234)+m^{\HR}(24) -m^{\HR}(124)- m^{\HR}(234)&=0,\\
        m^{\HR}(1234)+m^{\HR}(13) -m^{\HR}(123)- m^{\HR}(134)&=0.
    \end{align*}
    Using Corollary~\ref{COR:HRMI}, under the assumption of a conditionally negative definite variogram we obtain
    \begin{align*}
        \det\CM(\Gamma_{1234,1234})\det\CM(\Gamma_{24,24})  
        &-
        \det\CM(\Gamma_{124,124})\det\CM(\Gamma_{234,234})=0,
        \\
        \det\CM(\Gamma_{1234,1234})\det\CM(\Gamma_{13,13})
        &-
        \det\CM(\Gamma_{123,123})\det\CM(\Gamma_{134,134})=0.
    \end{align*}
    These are polynomial equations in the entries of $\Gamma$, which explicitly expressed are
    \begin{align*}
        \left(
        \Gamma_{12}\Gamma_{23}
        -\Gamma_{14}\Gamma_{23}
        -\Gamma_{12}\Gamma_{24}
        +2\Gamma_{13}\Gamma_{24}
        -\Gamma_{14}\Gamma_{24}
        -\Gamma_{23}\Gamma_{24}
        +\Gamma_{24}^{2}
        -\Gamma_{12}\Gamma_{34}
        +\Gamma_{14}\Gamma_{34}
        -\Gamma_{24}\Gamma_{34}
        \right)^{2} &= 0,\\
        \left(
        \Gamma_{12}\Gamma_{13}
        -\Gamma_{13}^{2}
        -\Gamma_{12}\Gamma_{14}
        +\Gamma_{13}\Gamma_{14}
        +\Gamma_{13}\Gamma_{23}
        +\Gamma_{14}\Gamma_{23}
        -2\Gamma_{13}\Gamma_{24}
        +\Gamma_{12}\Gamma_{34}
        +\Gamma_{13}\Gamma_{34}
        -\Gamma_{23}\Gamma_{34}
        \right)^{2} &= 0.
    \end{align*}
    Since both $Y_1\perp_e Y_3\mid \Y_{24}$ and $Y_2\perp_e Y_4\mid \Y_{13}$ are conditional independence statements for singletons, by Lemma~\ref{LEM:APPENDIXCEGR} they are given by the vanishing minors
    \begin{align*}
        0&=\det \left( \CM(\Gamma_{124,234})\right)\\&\propto \Gamma_{12}\Gamma_{23}
        -\Gamma_{14}\Gamma_{23}
        -\Gamma_{12}\Gamma_{24}
        +2\Gamma_{13}\Gamma_{24}
        -\Gamma_{14}\Gamma_{24}
        -\Gamma_{23}\Gamma_{24}
        +\Gamma_{24}^{2}
        -\Gamma_{12}\Gamma_{34}
        +\Gamma_{14}\Gamma_{34}
        -\Gamma_{24}\Gamma_{34}, \\
        0&=\det \left( \CM(\Gamma_{123,134})\right)\\
        &\propto\Gamma_{12}\Gamma_{13}
        -\Gamma_{13}^{2}
        -\Gamma_{12}\Gamma_{14}
        +\Gamma_{13}\Gamma_{14}
        +\Gamma_{13}\Gamma_{23}
        +\Gamma_{14}\Gamma_{23}
        -2\Gamma_{13}\Gamma_{24}
        +\Gamma_{12}\Gamma_{34}
        +\Gamma_{13}\Gamma_{34}
        -\Gamma_{23}\Gamma_{34}.
    \end{align*}
    We observe that these are equivalent to the polynomial equations above.
\end{ex}

\section{Extremal conditional independence via \texorpdfstring{$\sigma^2$}{the resistance radius}}\label{SUBSEC:MODULARITYSIGMA}
In this section, we consider a second set function that encodes extremal conditional independence for H\"{u}sler--Reiss distributions. Recall the invariant $\sigma^2$ and vector $\p$, which naturally appears in the H\"{u}sler--Reiss exponent measure density $\lambda(\y)$; see \eqref{eq: Fiedler--Bapat identity} and \eqref{eq: HR density precision matrix}. By associating to each marginal $\Y_{(I)}$ the corresponding invariant $\sigma^2(I)$, we obtain a function on subsets of the index set. Our main result states that for a H\"{u}sler--Reiss distribution which satisfies EMTP$_2$ and $\p>0$, modularity of the set function $\sigma^2$ is equivalent to extremal conditional independence for the corresponding modular sets.

\subsection{Marginalization and properties of \texorpdfstring{$\p$ and $\sigma^2$}{the resistance curvature and resistance radius}}
From the Fiedler--Bapat identity \eqref{eq: Fiedler--Bapat identity}, one can immediately derive the following explicit expressions for $\p$ and $\sigma^2$ in terms of the variogram:
$$
\p = \frac{\Gamma^{-1}\1}{\1^T\Gamma^{-1}\1} \quad\text{~and~}\quad \sigma^2 = \frac{1}{2}\big(\1^T\Gamma^{-1}\1\big)^{-1}.
$$
Since marginalization of H\"{u}sler--Reiss distributions corresponds to taking submatrices at the level of the variogram, we likewise find that the same equations as above hold for $\p(I)$ and $\sigma^2(I)$, when replacing $\Gamma$ by $\Gamma_{I,I}$ for any nonempty $I\subseteq [d]$ with $\vert I\vert>1$; for single marginals $Y_i$, we define $\p(i)=1$ and $\sigma^2(i)=0$ for consistency with equation \eqref{eq: Fiedler--Bapat identity}. We furthermore note that $\sigma^2(ij)=\Gamma_{ij}/4$ for all $ij\subseteq [d]$ and thus that, as was the case for $m^{\HR}$, the set function $\sigma^2$ contains all information about $\Gamma$.

While the expressions above for $\p$ and $\sigma^2$ allow for immediate computation, we mention a result that is more useful when comparing between different margins $I',I\subseteq [d]$. The following results are an immediate consequence of the inverse relation \eqref{eq: Fiedler--Bapat identity} between the variogram and precision matrix.
\begin{prop}[{\citet[Theorem 3.2.3]{fiedler2011}}]\label{PROP:RESISTANCEKRON}
Let $\Theta$ be a H\"{u}sler--Reiss precision matrix. Then for nonempty $I\subseteq [d]$ we have
\begin{align*}
\Theta(I) &= \Theta_{I,I} - \Theta_{I,\backslash I}(\Theta_{\backslash I,\backslash I})^{-1}\Theta_{\backslash I,I}
\\
\p(I) &= \p_{I} - \Theta_{I,\backslash I}(\Theta_{\backslash I,\backslash I})^{-1}\p_{\backslash I}
\\
\sigma^2(I) &= \sigma^2 - \p_{\backslash I}^T(\Theta_{\backslash I,\backslash I})^{-1}\p_{\backslash I}.
\end{align*}
\end{prop}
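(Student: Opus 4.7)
The plan is to derive all three identities at once by a single block Schur-complement computation applied to the Fiedler--Bapat identity \eqref{eq: Fiedler--Bapat identity}. The key structural observation is that the Cayley--Menger matrix is ``marginalization-friendly'': letting $\tilde I := I \cup \{*\}$ with $*$ indexing the auxiliary last row/column of $\CM$, one has $\CM(\Gamma)_{\tilde I, \tilde I} = \CM(\Gamma_{I,I})$, since the appended zero and all-ones vector of $\CM$ take the same form in both the full matrix and in its $I$-marginal version.

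Given this, I will apply the standard Schur complement formula for the inverse of a principal submatrix: for an invertible matrix $A$ and index set $J$ with $A_{\backslash J, \backslash J}$ invertible,
\[
((A^{-1})_{J,J})^{-1} \,=\, A_{J,J} - A_{J, \backslash J} (A_{\backslash J, \backslash J})^{-1} A_{\backslash J, J}.
\]
Taking $A = \CM(\Gamma)^{-1}$ and $J = \tilde I$, the left-hand side becomes $(\CM(\Gamma)_{\tilde I, \tilde I})^{-1} = \CM(\Gamma_{I,I})^{-1}$, which by Fiedler--Bapat is the block matrix with entries $\Theta(I), \p(I), \sigma^2(I)$. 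On the right-hand side, the blocks of $\CM(\Gamma)^{-1}$ are read off via the Fiedler--Bapat identity applied to the full matrix, so that the identity becomes
\[
\begin{pmatrix} \Theta(I) & \p(I) \\ \p(I)^T & \sigma^2(I) \end{pmatrix} \,=\, \begin{pmatrix} \Theta_{I,I} & \p_I \\ \p_I^T & \sigma^2 \end{pmatrix} - \begin{pmatrix} \Theta_{I, \backslash I} \\ \p_{\backslash I}^T \end{pmatrix} \Theta_{\backslash I, \backslash I}^{-1} \begin{pmatrix} \Theta_{\backslash I, I} & \p_{\backslash I} \end{pmatrix}.
\]
Matching the upper-left block, the off-diagonal blocks, and the lower-right block then produces the three asserted identities simultaneously.

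The only non-routine step is invertibility of $\Theta_{\backslash I, \backslash I}$, which is needed to apply the Schur complement formula. I would justify this from the structure of $\Theta$: it is positive semidefinite with $\ker(\Theta) = \spa(\1)$, so for nonempty $I$ any nonzero vector supported on $\backslash I$ (extended by zeros to $[d]$) lies outside $\spa(\1)$, forcing $\Theta_{\backslash I, \backslash I}$ to be positive definite. The main obstacle is therefore not mathematical depth but bookkeeping: one must consistently treat $*$ as the last coordinate of $\tilde I$ so that the three resulting sub-blocks of the Schur identity line up correctly with the three separate formulas for $\Theta(I), \p(I)$, and $\sigma^2(I)$.
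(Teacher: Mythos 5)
Your proposal is correct and is essentially the paper's own proof: both rest on the observation that $\CM(\Gamma)_{I+,I+}=\CM(\Gamma_{I,I})$ and then apply the standard fact that the inverse of a principal submatrix equals the Schur complement of the corresponding blocks of the inverse, read off via the Fiedler--Bapat identity. Your additional justification that $\Theta_{\backslash I,\backslash I}$ is positive definite (from $\ker(\Theta)=\spa(\1)$ and $I\neq\emptyset$) is a welcome detail the paper leaves implicit.
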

\begin{proof}
Combining the Fiedler--Bapat identity for both $\Y$ and $\Y_{(I)}$, we find
\begin{align*}
    \begin{pmatrix}
\Theta(I) &\p(I)\\\p(I)^T&\sigma^2(I)
\end{pmatrix} = 
\begin{pmatrix}
-\Gamma_{I,I}/2 & \1\\\1^T&0
\end{pmatrix}^{-1}
&= 
\left(\begin{pmatrix}
-\Gamma/2&\1\\\1^T&0
\end{pmatrix}_{I+,I+}\right)^{-1}\\ &= \begin{pmatrix}
\Theta_{I,I}&\p_I\\\p_I^T&\sigma^2
\end{pmatrix} - \begin{pmatrix}
\Theta_{I,\backslash I}& \p_{\backslash I}
\end{pmatrix}(\Theta_{\backslash I,\backslash I})^{-1}\begin{pmatrix}
\Theta_{\backslash I,}\\ \p_{\backslash I}^T
\end{pmatrix}
,
\end{align*}
where $I+$ denotes the set $I$ including the last row/column index, and the last equality is a standard result for Schur complements. This completes the proof.
\end{proof}
The following two closure properties follow immediately.
\begin{prop}[{\citet{REZ2023,devriendt2022a}}]
If a H\"{u}sler--Reiss distribution is EMTP$_2$, then so are all of its marginals.
Furthermore, if a H\"{u}sler--Reiss distribution is EMTP$_2$ with $\p\geq0$ (respectively, $\p>0$), then so are all of its marginals.
\end{prop}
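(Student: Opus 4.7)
The plan is to prove both claims by reading off the sign pattern of $\Theta(I)$ and $\p(I)$ directly from the Schur-complement identities of Proposition~\ref{PROP:RESISTANCEKRON}, together with the classical fact that a non-singular M-matrix has entrywise non-negative inverse. The cases $|I|=1$ and $I=[d]$ are trivial or vacuous ($\Theta(i)$ has no off-diagonals and $\p(i)=1$ by convention, while $\Theta([d])=\Theta$ and $\p([d])=\p$), so it suffices to treat $1<|I|<d$.

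For the EMTP$_2$ closure I would work from
$$
\Theta(I) \,=\, \Theta_{I,I} - \Theta_{I,\backslash I}(\Theta_{\backslash I,\backslash I})^{-1}\Theta_{\backslash I,I}
$$
and proceed by sign analysis. The off-diagonals of $\Theta_{I,I}$ are non-positive by EMTP$_2$. In the correction, both $\Theta_{I,\backslash I}$ and $\Theta_{\backslash I,I}$ are entrywise non-positive, as they collect off-diagonal entries of $\Theta$, so it suffices to show $(\Theta_{\backslash I,\backslash I})^{-1}\geq 0$ entrywise. This is the one nontrivial input. Under EMTP$_2$ combined with $\ker\Theta=\spa(\1)$ built into the H\"usler--Reiss parameterization, $\Theta$ is the Laplacian of a connected positively-weighted graph, so any principal submatrix obtained by removing a nonempty index set is a non-singular positive definite M-matrix; inverse-positivity is then a classical M-matrix fact. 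Multiplying matrices of the sign patterns $(\leq 0)(\geq 0)(\leq 0)$ yields a non-negative matrix, and subtracting it from $\Theta_{I,I}$ preserves non-positivity of the off-diagonals, establishing EMTP$_2$ for $\Theta(I)$.

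For the second claim I would apply the same reasoning to
$$
\p(I) \,=\, \p_I - \Theta_{I,\backslash I}(\Theta_{\backslash I,\backslash I})^{-1}\p_{\backslash I}.
$$
If $\p_{\backslash I}\geq 0$, then $(\Theta_{\backslash I,\backslash I})^{-1}\p_{\backslash I}\geq 0$, and premultiplication by the non-positive matrix $\Theta_{I,\backslash I}$ makes the whole correction entrywise non-positive. Hence $\p(I)\geq \p_I\geq 0$, and when $\p>0$ we have $\p_I>0$ and therefore $\p(I)>0$. The only genuine obstacle is the inverse-positivity of the grounded Laplacian $\Theta_{\backslash I,\backslash I}$; once that is cited or stated explicitly, everything else is routine sign bookkeeping on the Schur-complement formulas.
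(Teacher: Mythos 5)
Your proof is correct and follows exactly the route the paper intends: the paper states these closure properties as following ``immediately'' from the Schur-complement identities of Proposition~\ref{PROP:RESISTANCEKRON}, and your sign analysis---non-positivity of $\Theta_{I,\backslash I}$, positive definiteness of the grounded Laplacian $\Theta_{\backslash I,\backslash I}$, and inverse-positivity of non-singular M-matrices---is precisely the machinery the paper itself invokes (with the same citations to \citet{fiedler2011} and \citet{devriendt2022a}) in the proof of Theorem~\ref{THM:SIGMASQ}. No gaps; the handling of the trivial cases $|I|=1$ and $I=[d]$ and the strict-positivity variant are all in order.
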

We list a number of equivalent representations of $\sigma^2(I)$: 
\begin{lemma}[\citet{HEER2026, devriendt2022a}]\label{LEM:sigmasqquotient}
Let $\Gamma$ be a variogram matrix and $I\subseteq [d]$ with $\vert I\vert \geq 1$. Then
\begin{enumerate}
    \item[(i)] $\sigma^2(I) = \frac{\det(-\Gamma_{I,I}/2)}{\det(\CM(\Gamma_{I,I}))}=\Big(\frac{-1}{2}\Big)^{\vert I\vert}\cdot \frac{\det(\Gamma_{I,I})}{\det(\CM(\Gamma_{I,I}))}.$
    \item[(ii)] $\sigma^2(I)=-2\log \int_{\y_I^\top\p(I)>0}\lambda_I(\y_I) d\y_I$,
    \item[(iii)] $\sigma^2(I) = \max\{\tfrac{1}{2}\x^T\Gamma_{I,I}\x \,:\, \1^T\x=1\}$,
    \item[(iv)] $\sigma^2(I) = -\frac{1}{4}\sum_{i,j\in I}\Theta(I)_{ij}(\Gamma_{ik}-\Gamma_{jk})^2$ for any $k\in I$,
    \item[(v)] $\sigma^2(I) = \frac{1}{4\vert I\vert}\operatorname{Tr}(\Gamma_{I,I}\Theta(I)\Gamma_{I,I}).$
\end{enumerate}
\end{lemma}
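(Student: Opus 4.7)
The plan is to derive all five identities from the single Fiedler--Bapat identity
\[
\CM(\Gamma_{I,I})^{-1}=\begin{pmatrix}\Theta(I) & \p(I) \\ \p(I)^T & \sigma^2(I)\end{pmatrix},
\]
together with the two consequences obtained by expanding this block equation: $\Theta(I)\1=\0$ and $\Gamma_{I,I}\p(I)=2\sigma^2(I)\1$. Identity (i) is then just the cofactor formula for the bottom-right entry of a block inverse---the cofactor of the $(|I|+1,|I|+1)$-entry of $\CM(\Gamma_{I,I})$ equals $\det(-\Gamma_{I,I}/2)$---and the second equality simply pulls out the factor $(-1/2)^{|I|}$. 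For identity (iii), the first-order condition for $\max\{\tfrac12\x^T\Gamma_{I,I}\x : \1^T\x=1\}$ gives $\x^*=\p(I)$ with value $\sigma^2(I)$; writing any competitor as $\x=\p(I)+\z$ with $\1^T\z=0$, the cross term vanishes by $\Gamma_{I,I}\p(I)=2\sigma^2(I)\1$, and $\z^T\Gamma_{I,I}\z\le 0$ by conditional negative definiteness of $\Gamma$, confirming that $\p(I)$ is the maximizer.

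For identities (iv) and (v), the key observation is that the upper-left block of the Fiedler--Bapat identity, rearranged, yields $\Theta(I)\Gamma_{I,I}=-2\Id+2\p(I)\1^T$. For (iv), I would expand $(\Gamma_{ik}-\Gamma_{jk})^2$ into three pieces; the two ``pure square'' pieces are annihilated by $\Theta(I)\1=\0$, and only the cross term $-2\,\Gamma_{I,k}^T\Theta(I)\Gamma_{I,k}$ survives. Inserting the formula for $\Theta(I)\Gamma_{I,I}$ and using $\Gamma_{kk}=0$ together with $\Gamma_{I,I}\p(I)=2\sigma^2(I)\1$ collapses this cross term to a multiple of $\sigma^2(I)$. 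For (v), left-multiplying the same product by $\Gamma_{I,I}$ produces $\Gamma_{I,I}\Theta(I)\Gamma_{I,I}=-2\Gamma_{I,I}+4\sigma^2(I)\1\1^T$, whose trace equals $4|I|\sigma^2(I)$ because $\diag(\Gamma_{I,I})=0$ and $\operatorname{Tr}(\1\1^T)=|I|$.

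The main obstacle will be identity (ii). My approach is a change of variables $\y_I=\alpha\1+\z$ with $\z\perp\1$, whose Jacobian equals $\sqrt{|I|}$. Because $\Theta(I)\1=\0$ and $\1^T\p(I)=1$, the quadratic form in the exponent of $\lambda_I(\y_I)$ simplifies to $\z^T\Theta(I)\z+2\alpha+2\z^T\p(I)+\sigma^2(I)$, while the halfspace $\y_I^T\p(I)>0$ becomes $\alpha>-\z^T\p(I)$. Integrating out $\alpha$ produces a factor $e^{\z^T\p(I)}$ that exactly cancels the linear term in $\z$, leaving a centered $(|I|-1)$-dimensional Gaussian integral on $\1^\perp$. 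Its determinant $\det(\Theta(I)|_{\1^\perp})=|I|\det(\Theta(I)_{\backslash k,\backslash k})$ then combines with the Jacobian $\sqrt{|I|}$ and the normalizing constant of $\lambda_I$ to yield exactly $e^{-\sigma^2(I)/2}$; taking $-2\log$ gives (ii). The delicate point is simply tracking all these factors---the Jacobian of the orthogonal decomposition, the Gaussian normalization constant, and the pseudo-determinant identity $\operatorname{Det}(\Theta(I))=|I|\det(\Theta(I)_{\backslash k,\backslash k})$---and verifying that they cancel cleanly to the stated value.
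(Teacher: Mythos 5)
Your proposal takes a genuinely different route from the paper: the paper proves only (i) (via the Fiedler--Bapat identity and the cofactor formula, exactly as you do) and otherwise defers to \citet{HEER2026} for (ii) and to \citet[Chapter 6]{devriendt2022a} for (iii)--(v), whereas you derive everything from the single block identity $\CM(\Gamma_{I,I})^{-1}=\left(\begin{smallmatrix}\Theta(I)&\p(I)\\\p(I)^T&\sigma^2(I)\end{smallmatrix}\right)$ and its consequences $\Theta(I)\1=\0$, $\1^T\p(I)=1$, $\Gamma_{I,I}\p(I)=2\sigma^2(I)\1$ and $\Theta(I)\Gamma_{I,I}=2(\p(I)\1^T-\Id)$. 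This is sound and self-contained. Your argument for (iii) (decompose $\x=\p(I)+\z$, kill the cross term with $\Gamma_{I,I}\p(I)=2\sigma^2(I)\1$, use conditional negative definiteness on $\z$) and for (v) ($\Gamma_{I,I}\Theta(I)\Gamma_{I,I}=4\sigma^2(I)\1\1^T-2\Gamma_{I,I}$, take traces, $\diag(\Gamma_{I,I})=0$) are correct, and the bookkeeping in (ii) does cancel as you predict: the Jacobian $\sqrt{|I|}$, the Gaussian integral $(2\pi)^{(|I|-1)/2}\operatorname{Det}(\Theta(I))^{-1/2}$ over $\1^\perp$, and the normalizing constant $\sqrt{(2\pi)^{1-|I|}\det(\Theta(I)_{\backslash k,\backslash k})}$ combine via $\operatorname{Det}(\Theta(I))=|I|\det(\Theta(I)_{\backslash k,\backslash k})$ to leave exactly $e^{-\sigma^2(I)/2}$.

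The one place you must not wave your hands is the constant in (iv). Your reduction is right: only the cross term survives, and with the $-\tfrac14$ prefactor it equals $\tfrac12\,\Gamma_{k,I}\,\Theta(I)\,\Gamma_{I,k}$. But finishing the computation with $\Theta(I)\Gamma_{I,k}=2(\p(I)-e_k)$, $\Gamma_{k,I}\p(I)=2\sigma^2(I)$ and $\Gamma_{kk}=0$ gives $\Gamma_{k,I}\Theta(I)\Gamma_{I,k}=4\sigma^2(I)$, so the double sum over \emph{ordered} pairs $(i,j)\in I\times I$ evaluates to $2\sigma^2(I)$, not $\sigma^2(I)$. (For $|I|=2$ one has $\sigma^2(ij)=\Gamma_{ij}/4$, while the displayed sum gives $\Gamma_{ij}/2$.) So either the sum in (iv) must be read over unordered pairs $i<j$, or the prefactor should be $-1/8$; your phrase ``a multiple of $\sigma^2(I)$'' leaves precisely the point at issue unverified, and you should pin the constant down against the cited source before asserting the identity as stated.
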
 
\begin{proof}
The first expression follows by the Fiedler--Bapat identity and the cofactor formula for matrix inversion. The second expression is shown in \citet{HEER2026} and expressions (iii)-(v) are shown in \citet[Chapter 6]{devriendt2022a}.
\end{proof}

\subsection{Extremal conditional independence}
Under the conditions of \EMTPtwo and $\p>0$, the following theorem shows that modularity of $\sigma^2$ encodes extremal conditional independence for H\"usler--Reiss distributions, similar to $m^{\HR}$. We note that submodularity and the fact that modularity implies graph separation statements were proven before in \cite[Chapter 6]{devriendt2022a} in the context of graph theory and Laplacian and effective resistance matrices.
\begin{thm}\label{THM:SIGMASQ}
Let $\Y$ be a H\"{u}sler--Reiss vector that satisfies EMTP$_2$ and $\mathbf{p}\geq 0$. Then for any nonempty disjoint $A,B,C\subset [d]$, the function $\sigma^2$ is submodular
$$
\sigma^2(ABC) + \sigma^2(C) \leq\sigma^2(AC)+\sigma^2(BC),
$$
with equality ($=$ modular) if and only if the subsets satisfy the extremal conditional independence relation:
$$
\sigma^2(ABC) + \sigma^2(C) = \sigma^2(AC)+\sigma^2(BC) \iff \Y_A\perp_e\Y_B \mid \Y_C.
$$
\end{thm}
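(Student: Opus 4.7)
The plan is to reduce the four-term modularity identity for $\sigma^2$ to a single inequality for the quadratic form $h(S):=\p_S^T\Theta_{S,S}^{-1}\p_S$ on principal submatrices of $\Theta$, and then analyze it via a variational characterization together with the sign structure coming from EMTP$_2$. First I would reduce to $ABC=[d]$ using that the marginal $\Y_{(ABC)}$ is again H\"usler--Reiss (with variogram $\Gamma_{ABC,ABC}$), and that EMTP$_2$ and $\p\geq 0$ are preserved under marginalization. In this setting $\Theta=\Theta(ABC)$, $\p=\p(ABC)$, $\sigma^2=\sigma^2(ABC)$, and \Cref{PROP:RESISTANCEKRON} applied to the complements $\backslash(AC)=B$, $\backslash(BC)=A$, $\backslash C=AB$ gives
$$
\sigma^2(AC)=\sigma^2-\p_B^T\Theta_{B,B}^{-1}\p_B,\ \sigma^2(BC)=\sigma^2-\p_A^T\Theta_{A,A}^{-1}\p_A,\ \sigma^2(C)=\sigma^2-\p_{AB}^T\Theta_{AB,AB}^{-1}\p_{AB},
$$
collapsing the modularity defect to $\sigma^2(ABC)+\sigma^2(C)-\sigma^2(AC)-\sigma^2(BC)=h(A)+h(B)-h(AB)$. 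Each $\Theta_{S,S}$ with $\emptyset\neq S\subsetneq[d]$ is an invertible $M$-matrix by EMTP$_2$, so $h$ is well defined on the relevant subsets.

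For the submodularity inequality I would use the variational identity $h(S)=\max_x[2x^T\p_S-x^T\Theta_{S,S}x]$, achieved at $x_S^*:=\Theta_{S,S}^{-1}\p_S$. Because $\Theta_{S,S}^{-1}\geq 0$ entrywise (inverse of an invertible $M$-matrix) and $\p\geq 0$, we have $x_S^*\geq 0$. Evaluating the quadratic form underlying $h(AB)$ at the concatenation $(x_A^*,x_B^*)$ and using $\Theta_{S,S}x_S^*=\p_S$ yields $h(AB)\geq h(A)+h(B)-2(x_A^*)^T\Theta_{A,B}x_B^*$, and the last term is nonnegative since $\Theta_{A,B}\leq 0$ by EMTP$_2$ and $x_A^*,x_B^*\geq 0$. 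Substituting into the collapsed identity proves $\sigma^2(ABC)+\sigma^2(C)\leq\sigma^2(AC)+\sigma^2(BC)$.

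For the equivalence in the modular case, I would identify extremal conditional independence with the matrix condition $\Theta_{A,B}=0$. Reading the quadratic part of \eqref{eq: HR density precision matrix} in $\y_{AB}$ shows that the conditional exponent measure density $\lambda(\y_{AB}\mid\y_C)$ is Gaussian with precision $\Theta_{AB,AB}$, so the factorization in \Cref{definition: extremal CI} is exactly the Gaussian block-diagonality condition $\Theta_{A,B}=0$; alternatively this follows by combining the pairwise characterization \Cref{LEM:APPENDIXCEGR} with the standard pairwise-to-global Markov argument for nondegenerate Gaussians. The forward direction is then immediate: if $\Theta_{A,B}=0$ then $\Theta_{AB,AB}$ is block diagonal, its inverse factors, and $h(AB)=h(A)+h(B)$. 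For the converse, modularity forces every inequality in the chain above to be tight, so $(x_A^*,x_B^*)$ must be the global maximizer of $x\mapsto 2x^T\p_{AB}-x^T\Theta_{AB,AB}x$; the stationarity condition $\Theta_{AB,AB}(x_A^*,x_B^*)^T=\p_{AB}$ combined with $\Theta_{S,S}x_S^*=\p_S$ then yields the vector identity $\Theta_{A,B}x_B^*=0$.

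The main obstacle is converting this vector identity into the matrix identity $\Theta_{A,B}=0$, which is where I would invoke strict positivity $\p>0$ (equivalent to the positive row-sum condition on $\Gamma^{-1}$ highlighted in the introduction). Using that $(\Theta_{B,B}^{-1})_{ii}\geq 1/(\Theta_{B,B})_{ii}>0$ for an invertible $M$-matrix, $x_B^*=\Theta_{B,B}^{-1}\p_B$ is strictly positive componentwise, and then $(\Theta_{A,B}x_B^*)_i=\sum_{j\in B}\Theta_{ij}(x_B^*)_j$ is a sum of nonpositive terms that vanishes only if every $\Theta_{ij}=0$. The mismatch between the stated hypothesis $\p\geq 0$ and the genuinely needed $\p>0$ at this step is the one technicality I would flag, either by strengthening the hypothesis to match the introduction or by handling the coordinates where $\p_j=0$ by a separate support argument.
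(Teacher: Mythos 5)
Your proof is correct, and it reaches the result by a genuinely different mechanism than the paper, even though both arguments start from the same reduction via \Cref{PROP:RESISTANCEKRON} to the superadditivity statement $h(AB)\ge h(A)+h(B)$ for $h(S)=\p_S^T\Theta_{S,S}^{-1}\p_S$. The paper works on the ``covariance side'': it sets $K=(\Theta_{AB,AB})^{-1}$, uses Schur complements to write the modularity defect as the explicit three-term sum in \eqref{EQ:SIGMA}, exploits entrywise nonnegativity of the inverse $M$-matrix $K$, and characterizes the equality case as $K_{A,B}=\0$, which it translates into extremal conditional independence through the vanishing minors of \Cref{LEM:APPENDIXCEGR}(iv) and the pairwise-to-global reduction of \citet[Remark~4.3]{REZ2023}. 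You instead work on the ``precision side'': the variational identity $h(S)=\max_x\,[2x^T\p_S-x^T\Theta_{S,S}x]$ evaluated at the concatenated maximizers, combined with $\Theta_{A,B}\le 0$ and $x_A^*,x_B^*\ge 0$, gives superadditivity in one line, and the equality case is read off as $\Theta_{A,B}=0$, which you identify with extremal conditional independence directly via block-diagonality of the Gaussian conditional precision $\Theta_{AB,AB}$ (this is equivalent to the paper's $K_{A,B}=\0$, since a positive definite matrix has a zero off-diagonal block iff its inverse does). Your route is arguably more self-contained, as it avoids the inverse-$M$-matrix theorem and the external pairwise-to-global result; the paper's route yields the explicit defect formula \eqref{EQ:SIGMA}, which is reused in its four-cycle example. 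Your flag about the hypothesis is well taken and not a defect of your argument relative to the paper's: the theorem is stated with $\p\ge 0$, which suffices for submodularity, but the equality characterization genuinely requires $\p>0$ (so that $x_B^*=\Theta_{B,B}^{-1}\p_B$ is strictly positive), and indeed the paper's own proof silently switches to ``If $\p>0$'' at exactly that point, consistent with the positive-row-sum condition announced in the introduction.
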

\begin{proof}
    Without loss of generality, we may assume that $A,B,C$ is a partition of $[d]$. Under the assumption that $C$ is nonempty and that $\Y$ is \EMTPtwok we have that $L := \Theta_{AB,AB}$ is an M-matrix, since it is a positive definite matrix with non-positive off-diagonal entries, where positive definiteness holds because it is a submatrix of the Laplacian matrix of a connected graph \citep[Property 3.26]{devriendt2022a}.
    Since $K := L^{-1}$ is the inverse of an M-matrix it has nonnegative entries \citep[Theorem A.3.2]{fiedler2011}. Using Schur complements, we obtain
    \begin{align*}
        (L_{A,A})^{-1} &= K_{A,A} - K_{A,B}K_{B,B}^{-1}K_{A,B}^\top,\\
        (L_{B,B})^{-1} &= K_{B,B} - K_{A,B}^\top K_{A,A}^{-1}K_{A,B}.
    \end{align*}
    Using Proposition \ref{PROP:RESISTANCEKRON}, we can thus write:
    \begin{align*}
        &\sigma^2(ABC) =: \sigma^2,\\
        &\sigma^2(AC) = \sigma^2 - \p_B^{\top} (L_{B,B})^{-1} \p_B = \sigma^2 - \p_B^{\top} (K_{B,B} - K_{A,B}^\top K_{A,A}^{-1}K_{A,B}) \p_B,\\
        &\sigma^2(BC) = \sigma^2 - \p_A^{\top} (L_{A,A})^{-1} \p_A = \sigma^2 - \p_A^{\top} (K_{A,A} - K_{A,B}K_{B,B}^{-1}K_{A,B}^\top) \p_A,\\
        &\sigma^2(C) = \sigma^2 - \left(\p_A^\top K_{A,A} \p_A + \p_B^\top K_{B,B} \p_B + 2\p_A^\top K_{A,B}\p_B \right).
    \end{align*}
    Combining everything, we have
    \begin{align}
        \sigma^2(AC)+\sigma^2(BC)&-\sigma^2(ABC) - \sigma^2(C) \label{EQ:SIGMA}\\
        &=\p_B^{\top}K_{A,B}^\top K_{A,A}^{-1}K_{A,B}\p_B +  \p_A^{\top}K_{A,B}K_{B,B}^{-1}K_{A,B}^\top\p_A +2\p_A^\top K_{A,B}\p_B. \nonumber
    \end{align}
Due to positive definiteness of $K_{A,A}$ and $K_{B,B}$, the first two summands above are nonnegative. Furthermore, since $\p$ is nonnegative and $K_{A,B}$ is nonnegative as a submatrix of an inverse $M$-matrix, we furthermore know that $\p_A^{\top}K_{A,B}\p_B \geq 0$. This proves the submodularity statement in the theorem.

If $\p>0$, the sum in equation \eqref{EQ:SIGMA} is equal to zero if and only if $K_{AB} = \0$, since otherwise there would be at least one positive summand. By the cofactor expression for matrix inversion, the entries of $K_{A,B}=(\Theta_{AB,AB})^{-1}_{A,B}$ are given by the minors $\det(\Theta_{\backslash Ci, \backslash Cj})$, where $i \in A, j \in B$. 
    The vanishing of these minors is equivalent to
    \begin{equation*}
        Y_i \perp_e Y_j \mid \Y_C,
    \end{equation*}
    see Lemma~\ref{LEM:APPENDIXCEGR}.
    By \citet[Remark~4.3]{REZ2023}, we have under \EMTPtwo that
    $\Y_A \perp_e \Y_B \mid \Y_C$
    is equivalent to the collection of all statements $Y_i \perp_e Y_j \mid \Y_C$ for all pairs $i\in A$, $j\in B$.
    This concludes the proof.
\end{proof}

\begin{ex}
    Let $\Y$ be a H\"usler--Reiss graphical model with respect to the four-cycle as in Example~\ref{example: four-cycle}, and which satisfies EMTP$_2$ and $\p>0$. By the pairwise Markov property, $\Y$ satisfies two extremal conditional independence statements $Y_1\perp_e Y_3\mid \Y_{24}$ and $Y_2\perp_e Y_4\mid \Y_{13}$. By Theorem \ref{THM:SIGMASQ}, these extremal conditional independences imply the following modularity equations:
    \begin{align*}
        \sigma^2(1234) + \sigma^2(24) &= \sigma^2(124) + \sigma^2(234), \\
        \sigma^2(1234) + \sigma^2(13) &= \sigma^2(123) + \sigma^2(134).
    \end{align*}
    These modularity equations can be reduced to the following polynomial equations:
    \begin{align*}
    q_{13\mid 24}(\Theta,\p)
    &:= \frac{\Theta_{13}}{\Theta_{11}\Theta_{33}\big(\Theta_{11}\Theta_{33}-\Theta_{13}^{2}\big)}\Big(p_{1}^{2}\,\Theta_{13}\Theta_{33}
    - 2\,p_{1}p_{3}\,\Theta_{11}\Theta_{33}
    + p_{3}^{2}\,\Theta_{11}\Theta_{13}\Big) \;=\; 0,\\[0.3em]
    q_{24\mid 13}(\Theta,\p)
    &:= \frac{\Theta_{24}}{\Theta_{22}\Theta_{44}\big(\Theta_{22}\Theta_{44}-\Theta_{24}^{2}\big)}\Big(p_{2}^{2}\,\Theta_{24}\Theta_{44}
    - 2\,p_{2}p_{4}\,\Theta_{22}\Theta_{44}
    + p_{4}^{2}\,\Theta_{22}\Theta_{24}\Big) \;=\; 0.
    \end{align*}
    We observe that the denominators are nonzero, as they are determinants of positive definite submatrices. Furthermore, the equations hold under extremal conditional independence by Lemma~\ref{LEM:APPENDIXCEGR}, as $Y_1\perp_e Y_3\mid \Y_{24}\Leftrightarrow \Theta_{13}=0$ and $Y_2\perp_e Y_4\mid \Y_{13}\Leftrightarrow \Theta_{24}=0$.
    However, we can only certify equivalence under the conditions of Theorem~\ref{THM:SIGMASQ}, because only then the polynomials inside the brackets are guaranteed to be nonzero.
\end{ex}
To conclude, we note that even though the full distribution may not satisfy EMTP$_2$ and $\p>0$, Theorem \ref{THM:SIGMASQ} still applies when restricted to a marginal $\Y_{(I)}$ which does satisfy these properties.
\begin{cor}\label{cor: CI from sigma for marginals}
Let $\Y$ be a H\"{u}sler--Reiss vector. Then for any nonempty disjoint subsets $A,B,C\subset [d]$ such that the marginal vector $\Y_{(ABC)}$ is EMTP$_2$ with $\p>0$, we have 
$$
\Y_A\perp_e \Y_B \mid \Y_C \iff \sigma^2(ABC)+\sigma^2(C)=\sigma^2(AB)+\sigma^2(BC).
$$
\end{cor}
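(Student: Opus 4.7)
The plan is to reduce the corollary directly to Theorem \ref{THM:SIGMASQ} applied to the marginal vector $\Y_{(ABC)}$, exploiting the closure of H\"usler--Reiss distributions under marginalization together with the fact that both sides of the equivalence depend only on data associated to $ABC$.

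First I would note that the H\"usler--Reiss marginal $\Y_{(ABC)}$ has variogram $\Gamma_{ABC,ABC}$, and that the set function $\sigma^2(I)$ depends only on the submatrix $\Gamma_{I,I}$. Hence for every $I\subseteq ABC$, the value $\sigma^2(I)$ computed from $\Y$ coincides with the analogous quantity computed intrinsically for the marginal distribution $\Y_{(ABC)}$. In particular, the four values $\sigma^2(ABC),\sigma^2(AC),\sigma^2(BC),\sigma^2(C)$ appearing in the modularity equation are simultaneously the $\sigma^2$-values of the marginal.

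Next I would observe that extremal conditional independence $\Y_A\perp_e\Y_B\mid \Y_C$ is by Definition \ref{definition: extremal CI} a statement about the densities $\lambda_{ABC},\lambda_{AC},\lambda_{BC},\lambda_C$, obtained by marginalizing out coordinates outside $ABC$. These are precisely the exponent measure densities of the marginal vector $\Y_{(ABC)}$, so the extremal CI relation for $\Y$ holds if and only if the corresponding relation holds for $\Y_{(ABC)}$.

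Finally, since $\Y_{(ABC)}$ is assumed to be EMTP$_2$ with $\p>0$, Theorem \ref{THM:SIGMASQ} applies intrinsically to $\Y_{(ABC)}$ and yields the equivalence between extremal conditional independence and modularity of $\sigma^2$ on the subsets $A,B,C$ (which now partition the index set of the marginal). Combining the three observations gives the claimed equivalence. There is no serious obstacle here: the only subtlety is to be explicit that EMTP$_2$ and $\p>0$ are needed only on the marginal $\Y_{(ABC)}$, since the proof of Theorem \ref{THM:SIGMASQ} never uses anything about variables outside $ABC$, which is precisely the added flexibility this corollary records.
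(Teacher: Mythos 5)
Your proposal is correct and follows essentially the same route as the paper's own proof: reduce to Theorem \ref{THM:SIGMASQ} applied to the marginal $\Y_{(ABC)}$, using that both the $\sigma^2$-values and the extremal conditional independence statement depend only on the marginal. (You also implicitly fix what appears to be a typo in the stated equation, reading $\sigma^2(AC)$ where the statement writes $\sigma^2(AB)$, which is the correct reading.)
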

\begin{proof}
This follows because conditional independence for $A,B,C$ relative to $\Y$ is equivalent to conditional independence for $A,B,C$ relative to $\Y_{(ABC)}$. The corollary then follows by Theorem \ref{THM:SIGMASQ}.
\end{proof}

\section{Geometry of variogram matrices}\label{sec:geometry}

The set of all valid correlation matrices of a multivariate Gaussian distribution is a bounded subset of the positive semidefinite cone. This is typically referred to as the Gaussian elliptope.
\begin{defi}[Gaussian elliptope]
    The \emph{$d$-dimensional Gaussian elliptope} is defined as:
    \begin{equation*}
        \mathcal{E}_d := \left\{ R \in \mathbb{R}^{d \times d} \,\middle|\, 
        R = R^\top,\; R \text{ is positive semidefinite},\; \mathrm{diag}(R) = \mathbf{1} \right\}.
    \end{equation*}
We also write $\mathcal{E}_d\subset\mathbb{R}^{\binom{d}{2}}$ for the projection of this set onto the coordinates corresponding to the upper-triangular off-diagonal entries.
\end{defi}
Since the matrices in the elliptope are symmetric and have diagonal elements fixed to one, the dimension of this set without further constraints is $\binom{d}{2}$. It is a bounded set, since by construction the correlations of two Gaussians is bounded in the interval $[-1,1]$. The boundary $\partial\mathcal{E}_d$ of the Gaussian elliptope is the intersection of $\mathcal{E}_d$ with the hypersurface of singular matrices, cut out by the polynomial $\det(R)=0$. Its interior $\mathcal{E}_d^\circ$ contains positive definite correlation matrices, which is the generic case.

We aim to find and study a similar object for H\"usler--Reiss distributions. In the case of Gaussians, the standard parameterization via (positive semidefinite) covariance matrices is normalized by rescaling rows and columns to obtain a diagonal of ones. For H\"{u}sler--Reiss distributions, the standard parameterization is via (conditionally negative definite) variograms $\mathcal{D}^d$, but this is again an unbounded set. In order to obtain a bounded set, we consider variograms with bounded $\sigma^2$. Taking the upper-triangular off-diagonal entries, we can again think of this as a subset of $\mathbb{R}^{\binom{d}{2}}$.
\begin{defi}[H\"{u}sler--Reiss elliptope]
    We define the \emph{$d$-dimensional Hüsler--Reiss elliptope} as:
        \begin{equation*}
        \mathcal{F}_d := \left\{ \Gamma \in \mathcal{D}^{d} \,\middle|\, \sigma^2(\Gamma) \leq 1 \right\}.
    \end{equation*}
\end{defi}
\begin{remark}
Any variogram $\Gamma\in\mathcal{D}^d$ corresponds to a Euclidean distance matrix that contains the squared Euclidean distances between $d$ affinely independent points in $\RR^{d-1}$. In other words, the entries $\Gamma_{ij}$ are the squared edge lengths of a simplex. Moreover, the squared circumradius of this simplex is $\sigma^2$ and the barycentric coordinate of the circumcenter is $\p$; see \cite{fiedler2011}. In particular, this means that the H\"{u}sler--Reiss elliptope corresponds to the set of all Euclidean distance matrices of simplices with circumradius at most one.
\end{remark}

We can relate the H\"{u}sler--Reiss elliptope to the Gaussian elliptope by the following map:
\begin{prop}\label{prop: R and Gamma}
If $\Gamma$ is a variogram, then $R(\Gamma):=-\tfrac{1}{2\sigma^2}\Gamma+\1\1^T$ is a correlation matrix of rank $d-1$ with $\ker(R) = \spa(\mathbf{p})$ and thus $\mathbf{p}^T\1 \neq  0$. If $R$ is a correlation matrix of rank $d-1$ with $\ker(R) = \spa(\mathbf{w})$ and $\mathbf{w}^T\1 \neq 0$, then $\Gamma(R) := 2(\1\1^T-R)$ is a variogram with $\sigma^2 = 1$ and $\p=\mathbf{w}/(\mathbf{w}^T\1)$.
\end{prop}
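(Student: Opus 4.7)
The plan is to prove the two directions separately by directly verifying the defining properties of correlation and variogram matrices. The main computational tool is the Fiedler--Bapat identity \eqref{eq: Fiedler--Bapat identity}, which through the block identity $\CM(\Gamma)\CM(\Gamma)^{-1}=I$ yields the two relations $\Gamma\p=2\sigma^2\1$ and $\p^T\1=1$; these will serve as the bridge between $\Gamma$, $R$, and $\p$ throughout the proof.

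For the forward direction, set $R:=-\Gamma/(2\sigma^2)+\1\1^T$. Symmetry and $\diag(R)=\1$ follow immediately from $\diag(\Gamma)=0$ and the symmetry of $\Gamma$, and the two Fiedler--Bapat relations give $R\p=\0$ and $\p^T\1=1\neq 0$ directly. To prove $R\psd$, I would write every $x\in\RR^d$ uniquely as $x=(x^T\1)\p+y$ with $y^T\1=0$, use $R\p=\0$ to reduce $x^T R x=y^T R y$, and then simplify $y^T R y=-y^T\Gamma y/(2\sigma^2)\ge 0$ using the conditional negative definiteness of $\Gamma$ on $\1^\perp$. The same computation shows that any $x\in\ker R$ forces $y=\0$, hence $\ker R=\spa(\p)$ and $R$ has rank $d-1$.

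For the reverse direction, set $\Gamma:=2(\1\1^T-R)$. Symmetry and $\diag(\Gamma)=\0$ are immediate. For conditional negative definiteness, the assumption $\mathbf{w}^T\1\neq 0$ guarantees that $\spa(\mathbf{w})\cap\1^\perp=\{\0\}$, so any nonzero $x\perp\1$ satisfies $x\notin\ker R$ and therefore $x^T R x>0$, giving $x^T\Gamma x=-2x^T R x<0$. To identify $\sigma^2$ and $\p$, set $\mathbf{p}^*:=\mathbf{w}/(\mathbf{w}^T\1)$; a one-line computation using $R\mathbf{w}=\0$ gives $\Gamma\mathbf{p}^*=2\1$ and $\1^T\mathbf{p}^*=1$. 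Comparing with the Fiedler--Bapat relations, and invoking the uniqueness of solutions of this linear system (since $\CM(\Gamma)$ is invertible for $\Gamma\in\mathcal{D}^d$), I conclude $\p=\mathbf{p}^*$ and $\sigma^2=1$. I do not anticipate a serious obstacle: the only subtle point is recognizing that the hypothesis $\mathbf{w}^T\1\neq 0$ is precisely what prevents $\ker R$ from intersecting the hyperplane $\1^\perp$ on which conditional negative definiteness must be checked.
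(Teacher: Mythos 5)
Your proof is correct and follows essentially the same route as the paper: both directions hinge on evaluating the quadratic form on $\spa(\1)^\perp$ together with the Fiedler--Bapat relations $\Gamma\p=2\sigma^2\1$, $\p^T\1=1$. The only (harmless) differences are that you establish positive semidefiniteness via the explicit decomposition $\x=(\x^T\1)\p+\y$ rather than by counting positive eigenvalues of the restriction to $\1^\perp$, and that you spell out the uniqueness argument (invertibility of $\CM(\Gamma)$) that the paper leaves implicit when identifying $\sigma^2=1$ and $\p=\mathbf{w}/(\mathbf{w}^T\1)$.
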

\begin{proof}
For any nonzero vector $\x\in\spa(\1)^\perp$, the quadratic form $\x^T R(\Gamma)\x=-\frac{1}{2\sigma^2}\x^T\Gamma\x$ is positive by conditional negative definiteness, so $R(\Gamma)$ has at least $\dim({\spa(\1)}^\perp)=d-1$ positive eigenvalues. Since $\p^T\1=1$ by definition of the vector $\p$, we find $R(\Gamma)\p=0$ and thus, by the former part that $\ker(R(\Gamma))=\spa(\p)$ and that it is a positive semidefinite matrix. This proves that $R(\Gamma)$ is a correlation matrix.

For the converse, we proceed again by considering the quadratic form $\x^T\Gamma(R)\x=-2\x^TR\x$ with nonzero $\x\in\spa(\1)^\perp$. By positive semidefiniteness of $R$ and $\mathbf{w}^T\1\neq 0$, it follows that this quadratic form is negative and thus that $\Gamma$ is a variogram. Since $\Gamma(R)\mathbf{w}=2(\1\1^T-R)\mathbf{w}=2(\1^T\mathbf{w}).\mathbf{w}$, the claim about $\p$ and $\sigma^2$ follow from their defining equation $\Gamma(R)\p=2\sigma^2\p$ and $\p^T\1=1$.
\end{proof}
\begin{prop}\label{PROP:ELLIPTOPE}
Consider the following subset of the boundary of the Gaussian elliptope:
$$
\partial \mathcal{E}_d^* := \big\{R \in \mathcal{E}_d \mid \ker(R)=\spa(\mathbf{w}) \textup{~with~}\mathbf{w}^T\1\neq 0\big\} \subset\partial\mathcal{E}.
$$
Then the H\"{u}sler--Reiss elliptope, as a subset of $\mathbb{R}^{\binom{d}{2}}$, is equal to
$$
\mathcal{F}_d \,=\, 2(\1\1^T-\mathcal{E}_d^\circ\cup\partial\mathcal{E}_d^*) \,=\, \bigcup\limits_{t\in(0,1]}2t(\1\1^T-\partial\mathcal{E}_d^*).
$$
\end{prop}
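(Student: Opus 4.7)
The plan is to establish the first equality via an explicit bijection between $\mathcal{F}_d$ and $\mathcal{E}_d^\circ \cup \partial\mathcal{E}_d^*$ given by the affine map $\Gamma \mapsto R := \1\1^T - \Gamma/2$, whose inverse is $R \mapsto 2(\1\1^T - R)$. Since $\diag(\Gamma) = 0$, the matrix $R$ has unit diagonal automatically. The workhorse is the identity
\[
\x^T R \x \,=\, (\1^T\x)^2 - \tfrac{1}{2}\x^T\Gamma\x,
\]
which I would analyze by splitting on whether $\1^T\x = 0$. For nonzero $\x \in \spa(\1)^\perp$, conditional negative definiteness of $\Gamma$ gives $\x^T R\x > 0$. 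For $\1^T\x \neq 0$, homogeneity reduces to the normalization $\1^T\x = 1$, and the max characterization $\sigma^2 = \max_{\1^T\x = 1}\tfrac{1}{2}\x^T\Gamma\x$ from Lemma~\ref{LEM:sigmasqquotient}(iii) then yields $R \succeq 0 \iff \sigma^2 \leq 1$, with $R$ singular iff $\sigma^2 = 1$. For $\sigma^2 < 1$ this places $R$ in $\mathcal{E}_d^\circ$. For $\sigma^2 = 1$, the Fiedler--Bapat identity combined with the Lagrange condition at the maximizer gives $\Gamma\p = 2\sigma^2\1$ and $\p^T\1 = 1$, whence $R\p = 0$; any $\y \in \ker(R) \cap \spa(\1)^\perp$ would force $\y^T\Gamma\y = 0$, contradicting conditional negative definiteness, so $\ker(R) = \spa(\p)$ is one-dimensional and transverse to $\spa(\1)^\perp$, placing $R \in \partial\mathcal{E}_d^*$. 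The reverse inclusions follow by reading the same identity backwards for $R \in \mathcal{E}_d^\circ$ and invoking Proposition~\ref{prop: R and Gamma} directly for $R \in \partial\mathcal{E}_d^*$.

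For the second equality, I would exploit the homogeneous rescaling in Proposition~\ref{prop: R and Gamma}. Given $\Gamma \in \mathcal{F}_d$, set $t := \sigma^2(\Gamma) \in (0, 1]$ (positivity of $\sigma^2$ follows from strict positivity of off-diagonal entries of $\Gamma$) and $R^* := R(\Gamma) = -\tfrac{1}{2\sigma^2}\Gamma + \1\1^T \in \partial\mathcal{E}_d^*$; rearranging yields $\Gamma = 2t(\1\1^T - R^*)$. Conversely, for $t \in (0,1]$ and $R^* \in \partial\mathcal{E}_d^*$, the matrix $\Gamma := 2t(\1\1^T - R^*)$ has zero diagonal and satisfies $\x^T\Gamma\x = -2t\,\x^T R^* \x < 0$ for nonzero $\x \in \spa(\1)^\perp$, since $\ker(R^*) \cap \spa(\1)^\perp = \{\0\}$ by definition of $\partial\mathcal{E}_d^*$. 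Taking the kernel vector $\mathbf{w}$ of $R^*$ normalized so $\1^T\mathbf{w} = 1$ yields $\Gamma\mathbf{w} = 2t\1$, and uniqueness of the solution to $\Gamma\p = 2\sigma^2\1$ under the constraint $\1^T\p = 1$ identifies $\mathbf{w}$ with $\p(\Gamma)$ and $t$ with $\sigma^2(\Gamma) \leq 1$, so $\Gamma \in \mathcal{F}_d$.

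The most delicate step will be the boundary analysis at $\sigma^2 = 1$, namely the simultaneous requirements that $\ker(R)$ is exactly one-dimensional and that it avoids $\spa(\1)^\perp$. This transversality is what ensures $R$ lands in the nondegenerate boundary stratum $\partial\mathcal{E}_d^*$ rather than in a more singular stratum of $\partial\mathcal{E}_d$, and it is also what makes the inverse map well-defined. The combination of strict conditional negative definiteness of $\Gamma$ with the stationarity relation $\Gamma\p = 2\sigma^2\1$ is precisely what supplies both $\p \in \ker(R)$ and the transversality to $\spa(\1)^\perp$, making the bijection clean on both strata.
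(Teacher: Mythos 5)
Your proof is correct and follows essentially the same route as the paper: both rest on the affine correspondence $\Gamma \leftrightarrow 2(\1\1^T-R)$, Proposition~\ref{prop: R and Gamma}, and the variational characterization $\sigma^2=\max\{\tfrac12\x^T\Gamma\x:\1^T\x=1\}$ from Lemma~\ref{LEM:sigmasqquotient}(iii). If anything, your verification of positive (semi)definiteness of $R$ by splitting the quadratic form identity $\x^TR\x=(\1^T\x)^2-\tfrac12\x^T\Gamma\x$ over the strata $\1^T\x=0$ and $\1^T\x\neq 0$ is more complete than the paper's check, which only tests the form on $\spa(\1)^\perp$ and at $\x=\1$.
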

\begin{proof}
Recall that the boundary of the elliptope contains singular correlation matrices so indeed $\partial\mathcal{E}^*_d\subset\partial\mathcal{E}$. We first show the second equality (the union expression) and then the first.

We will show that the map $\varphi:\widetilde{R} \mapsto 2(\1\1^T-\widetilde{R})$ is a bijection between $\mathcal{S}:=\bigcup_{t\in(0,1]}2t\cdot(\1\1^T-\partial\mathcal{E}^*_d)$ and $\mathcal{F}_d$, both taken as subsets of $\mathbb{R}^{d\times d}$. The claimed equality for $\mathcal{F}_d$ then follows by projecting onto the $\binom{d}{2}$ upper-diagonal entries. \emph{(Surjectivity)} For any $\Gamma\in \mathcal{F}_d$, we know by Proposition \ref{prop: R and Gamma} that the matrix $R(\Gamma)=-\frac{1}{2\sigma^2}\Gamma +\1\1^T$ is a correlation matrix of rank $d-1$ with $\ker(R)=\spa(\mathbf{w})$ and $\mathbf{w}^T\1\neq 0$, and thus that $R(\Gamma)\in \mathcal{S}$. Since $0<\sigma^2\leq 1$, we furthermore know that $\sigma^2  R(\Gamma)\in\mathcal{S}$ and that $\varphi(\sigma^2R(\Gamma))=\Gamma$; as $\Gamma$ was arbitrary, we have shown that $\varphi$ is surjective. 
\emph{(Injectivity)} Let $\widetilde{R},\widehat{R}\in\mathcal{S}$ be such that $\varphi(\widetilde{R})=\varphi(\widehat{R})=\Gamma$, then by definition of the map $\varphi$, it immediately follows that $\widetilde{R}=\widehat{R}$. This proves injectivity and thus establishes the second expression for $\mathcal{F}_d$.

The proof of the first statement follows similarly. For the boundary part $\partial\mathcal{E}_d^*$, the equality follows from the second statement with $t=1$. For the interior, we consider the map $\psi:\widetilde{R}\mapsto2(\1\1^T-\widetilde{R})$ from $\mathcal{E}_d^\circ$ to $\mathcal{F}_d$, both taken as subsets of $\mathbb{R}^{d\times d}$. The image of $\widetilde{R}\in\mathcal{E}^\circ_d$ under this map is a variogram since $\x^T(\1\1^T-\widetilde{R})\x<0$ for all nonzero $\x\perp \1$ by positive definiteness of $\widetilde{R}$. \textit{(Injectivity)} Since $\psi(\widetilde{R})=\psi(\widehat{R})\Leftrightarrow2(\1\1^T-\widetilde{R})=2(\1\1^T-\widehat{R})\Rightarrow \widetilde{R}=\widehat{R}$ the map is injective. \textit{(Surjectivity)} Let $\Gamma$ be any variogram with $\sigma^2<1$. Then we know that $\psi(-\frac{1}{2}\Gamma+\1\1^T)=\Gamma$ by construction. We show that $-\frac{1}{2}\Gamma+\1\1^T$ lies in $\mathcal{E}^\circ$: it is symmetric, has unit diagonal and it is positive definite because $\x^T(-\tfrac{1}{2}\Gamma+\1\1^T)\x >0$ for all nonzero $\x\perp\1$ and
$$
\1^T(-\tfrac{1}{2}\Gamma+\1\1^T)\1 = d^2- \tfrac{1}{2}\1^T\Gamma\1 \geq d^2-d^2\sigma^2> 0,
$$
where the first inequality follows because $\sigma^2=\tfrac{1}{2}\max \{\x^T\Gamma\x:\x^T\1=1\}$ (see expression (iii) in Lemma \ref{LEM:sigmasqquotient}) and the second one because $\sigma^2<1$. This completes the proof after projecting onto $\mathbb{R}^{\binom{d}{2}}$.
\end{proof}
\begin{remark}
The second expression for $\mathcal{F}_d$ in Proposition \ref{PROP:ELLIPTOPE} suggests that the subset $\partial\mathcal{E}^*_d$ of the Gaussian elliptope boundary already describes the H\"{u}sler--Reiss elliptope up to rescaling by $t\in(0,1]$. This suggest that one could also focus on variograms with $\sigma^2=1$.
\end{remark}
\begin{ex}[$d=3$]
In the $3$-dimensional case, we can visualize the H\"{u}sler--Reiss elliptope $\mathcal{F}_3\subset\mathbb{R}^3$, with coordinates corresponding to the off-diagonal variogram entries $\Gamma_{12},\Gamma_{13},\Gamma_{23}$. 
As prescribed by Proposition \ref{PROP:ELLIPTOPE}, we either consider the Gaussian elliptope with some points on the boundary removed (first expression) or (i) start from the boundary of the Gaussian elliptope $\mathcal{E}_3$ with correlation matrices of rank $<2$ or with kernel spanned by $\mathbf{w}\perp \1$ removed, (ii) apply the transformation $2(\1\1^T-R)$ from Proposition \ref{prop: R and Gamma} and (iii) take all $t$-scaled points, for $t\in (0,1]$. Figure \ref{fig:extremalelliptope} shows the resulting set, with points in red indicating the points removed from the boundary.
\begin{figure}[h!]
    \centering
    \includegraphics[width=0.8\linewidth]{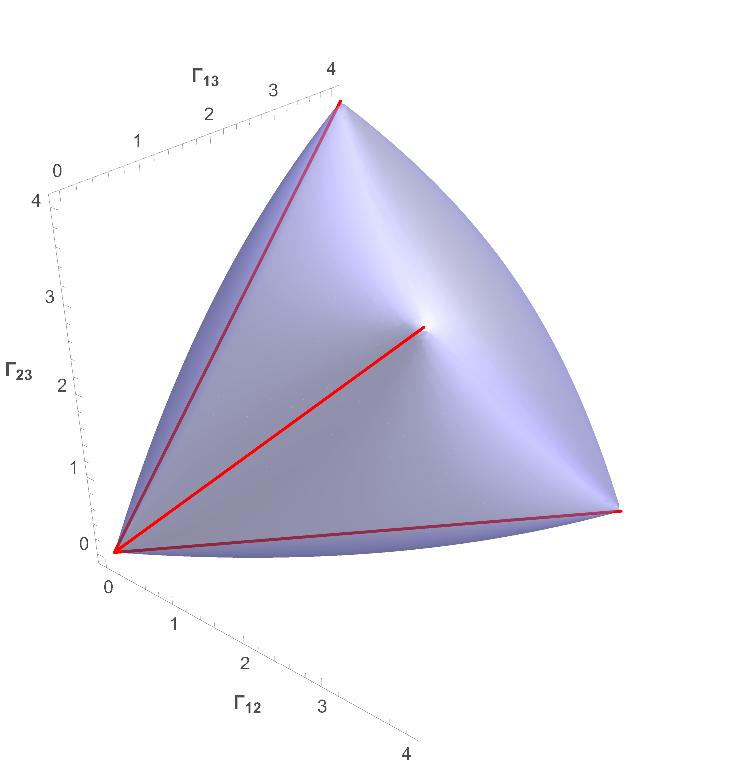}
    \caption{The $3$-dimensional H\"usler--Reiss elliptope $\mathcal{F}_3\subset\mathbb{R}^3$. The red lines correspond to correlation matrices removed as described in Proposition \ref{PROP:ELLIPTOPE}.}
    \label{fig:extremalelliptope}
\end{figure}

Next, we consider the intersection of $\mathcal{F}_d$ with the additional constraint that $\p\geq 0$. For $d=3$, this is equivalent to the EMTP$_2$ condition. Figure \ref{fig:elliptopelinear} shows the resulting set. We furthermore highlight that the linear faces of the resulting set are precisely the three possible conditional independence constraints in the $3$-dimensional case, i.e., the $3$ possible connected graphical models on $3$ variables:
$$
Y_i\perp_e Y_j\mid Y_k \iff \Gamma_{ij}=\Gamma_{ik}+\Gamma_{jk}
$$
for $i,j,k$ distinct, corresponding to a graphical model for the path $i-k-j$. The plane in red corresponds to the constraint $Y_1\perp_e Y_2\mid Y_3$ and the plane in green corresponds to the constraint $Y_2\perp_e Y_3\mid Y_1$. The plane corresponding to the constraint $Y_1\perp_e Y_3\mid Y_2$ lies in the opposite side of the elliptope.
\end{ex}

\begin{figure}
    \centering
    \includegraphics[width=0.8\linewidth]{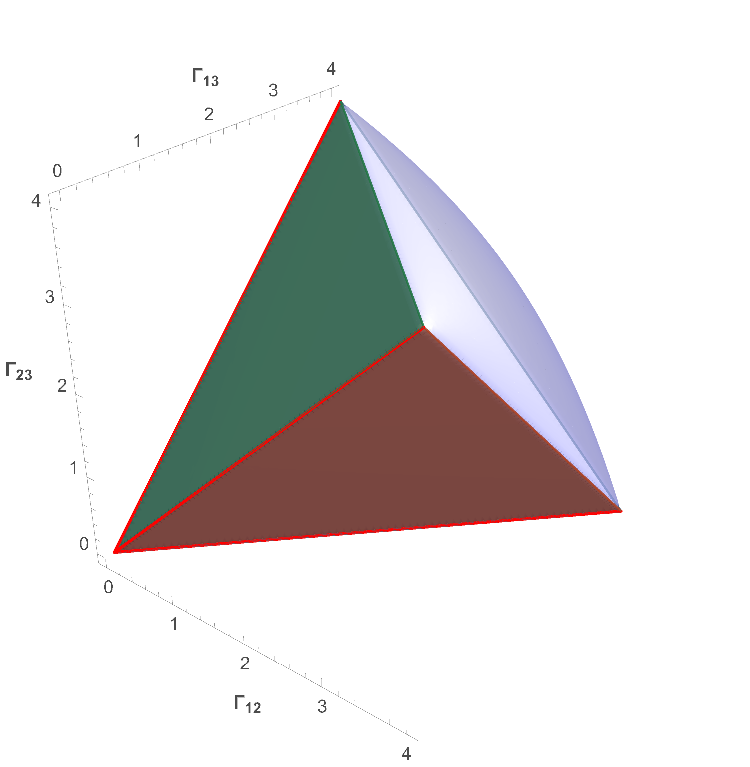}
    \caption{The H\"usler--Reiss elliptope $\mathcal{F}_3$ intersected with the region where $\p \geq 0$ holds or, equivalently for $d=3$, where \EMTPtwo holds.}
    \label{fig:elliptopelinear}
\end{figure}

\section*{Acknowledgements}
We thank Tobias Boege for the suggestion to study multiinformation for H\"usler--Reiss distributions. For the purpose of open access, the authors have applied a CC BY public copyright licence to any author accepted manuscript arising from this submission.

    \bibliographystyle{chicago}
	\bibliography{bibliography}

\end{document}